\numberwithin{equation}{section}
\tikzset{every picture/.style={line width=0.75pt}}
\newtheorem{theorem}{Theorem}[section]
\newtheorem{prop}[theorem]{Proposition}
\newtheorem{lemma}[theorem]{Lemma}
\theoremstyle{definition}
\newcommand{\ts}{\hspace{0.5pt}}
\newcommand{\nts}{\hspace{-0.5pt}}
\newcommand{\RR}{\mathbb{R}\ts}
\newcommand{\CC}{\mathbb{C}}
\newcommand{\ZZ}{\mathbb{Z}}
\newcommand{\NN}{\mathbb{N}}
\newcommand{\QQ}{\mathbb{Q}}
\newcommand{\XX}{\mathbb{X}}
\newcommand{\YY}{\mathbb{Y}}
\newcommand{\cK}{\mathcal{K}}
\newcommand{\cL}{\mathcal{L}}
\newcommand{\cO}{\mathcal{O}}
\newcommand{\cR}{\mathcal{R}}
\newcommand{\cT}{\mathcal{T}}
\newcommand{\vL}{\varLambda}
\newcommand{\vrho}{\varrho}
\newcommand{\vn}{\varnothing}
\newcommand{\CAP}{\mathrm{CAP}}
\newcommand{\CASPr}{\mathrm{CASPr}}
\newcommand{\ii}{\mathrm{i}}
\newcommand{\dd}{\,\mathrm{d}}
\newcommand{\ee}{\,\mathrm{e}}
\newcommand{\Mat}{\mathrm{Mat}}
\newcommand{\oplam}{\mbox{\Large $\curlywedge$}}
\newcommand{\card}{\ts\mathrm{card}\ts}
\newcommand{\inte}{\mathrm{int}}
\newcommand{\myfrac}[2]{\frac{\raisebox{-2pt}{$#1$}}
	{\raisebox{0.5pt}{$#2$}}}
\newcommand{\bhex}{_{\mathrm{hex}}}
\renewcommand{\@captionfont}{\small}
\DeclareMathOperator{\dens}{dens}
\DeclareMathOperator{\vol}{vol}
\DeclareMathOperator{\sinc}{sinc}
\DeclareFontFamily{U}{mathx}{}
\DeclareFontShape{U}{mathx}{m}{n}{<-> mathx10}{}
\DeclareSymbolFont{mathx}{U}{mathx}{m}{n}
\DeclareMathAccent{\widecheck}{0}{mathx}{"71}
\DeclareMathAlphabet{\mathmybb}{U}{bbold}{m}{n}
\newcommand{\defeq}{\mathrel{\mathop:}=}
\begin{document}
	
\title[Diffraction of the Hat and Spectre tilings and some of their
relatives]{Diffraction of the Hat and Spectre tilings \\[3mm] and some
  of their relatives}
	
\author{Michael Baake}
\address{Fakult\"at f\"ur Mathematik, Universit\"at Bielefeld,
	\newline \indent Postfach 100131, 33501 Bielefeld, Germany}
\email{$\{$mbaake, gaehler, jmazac$\}$@math.uni-bielefeld.de}

\author{Franz G\"ahler}
	
\author{Jan Maz\'{a}\v{c}}
	
\author{Andrew Mitchell} \address{Faculty of Science,
          Technology, Engineering and Mathematics, The Open
          University, \newline \indent Milton Keynes, MK7 6AA, UK;
          \newline \indent present address: 
          Department of Mathematical Sciences,
          Loughborough University, \newline \indent
          Loughborough, LE11 3TU, UK }
\email{A.C.Mitchell@lboro.ac.uk}

\begin{abstract}
  The diffraction spectra of the Hat and Spectre monotile tilings,
  which are known to be pure point, are derived and computed
  explicitly. This is done via model set representatives of
  self-similar members in the topological conjugacy classes of the Hat
  and the Spectre tiling, which are the CAP and the CASPr tiling,
  respectively. This is followed by suitable reprojections of the
  model sets to represent the original Hat and Spectre tilings, which
  also allows to calculate their Fourier--Bohr coefficients
  explicitly. Since the windows of the underlying model sets have
  fractal boundaries, these coefficients need to be computed via an
  exact renormalisation cocycle in internal space.
\end{abstract}
	
\keywords{Monotiles, model sets, diffraction spectra,
  Rauzy fractals, Fourier cocycle}
\subjclass{52C23, 42A38}

\maketitle

\section{Introduction}
The recently discovered Hat and Spectre monotiles~\cite{Hat,Spectre}
give rise to tilings of the plane that are mean quasiperiodic (in the
sense of Weyl~\cite{LSS-long}) and possess pure-point dynamical
spectra~\cite{BGS,BGS2}.  Therefore, they have pure-point diffraction
\cite{BL} and, in fact, they are \emph{mutually locally derivable}
(MLD) with reprojections of regular model sets. Each of the latter
emerges from a~fully Euclidean \emph{cut-and-project scheme} (CPS)
with two-dimensional direct (or physical) and internal spaces. The
corresponding windows have been determined in~\cite{BGS,BGS2} and are
Rauzy fractals \cite[Ch.~7.4]{PyFo} with (some) boundaries of
non-integer Hausdorff dimension.

A numerical approximation to the diffraction of the Hat tiling was
obtained by Socolar~\cite{Soc} soon after its discovery, by
considering a large patch of a topologically conjugate tiling called
the \emph{Golden Key tiling} and its embedding in six-dimensional
space. Further numerical studies of large finite patches appeared in
\cite{Kap1} for the Hat tiling, with the correct conclusion of the
periodicity of the diffraction image, but an unclear interpretation of
diffuse parts, which can only be a finite-size effect. The
corresponding finite-size analysis for the Spectre tiling appeared in
\cite{Kap2}, which correctly gives a diffraction image without
translation symmetries. In our approach, we start from the established
result that the diffraction images, in the infinite size limit, are
pure point, with intensities that emerge from the \emph{Fourier--Bohr}
(FB) amplitudes by taking the square of their absolute values, which
leads to exact results.

To achieve maximal simplicity and transparency, we remove all
unnecessary dimensions and work with representative point sets that
consist of \emph{one} translation class only (with respect to the
intrinsically defined return module of each tiling). In doing so, it
is advantageous to also remove any superfluous dimension of internal
space, so that all windows have full dimension in it. Concretely,
although \cite{Soc} uses the projection method, the chosen embedding
has two unnecessary dimensions. Since the FB amplitudes needed for the
calculation of the diffraction are given via the Fourier transform of
(characteristic functions of) windows with fractal boundaries, they
are generally difficult to calculate, even approximately.  In
particular, there are known examples where the usual method of
finite-patch approximation converges really slowly; consult
\cite{BG-Rauzy2} for a~fully worked example.

For primitive unimodular inflation tilings, the underlying exact
inflation structure allows the Fourier transform of the associated
window to be represented as an infinite product of Fourier matrices,
called the \emph{Fourier cocycle}, with compact and exponentially fast
convergence~\cite{BG-Rauzy}.  In particular, this approach is
applicable even in cases where the window has fractal boundary, so can
be employed in our setting.  Specifically, we use an embedding with
minimal dimension in conjunction with an exact formula for the FB
coefficients, whose numerical computation can be done to any desired
precision in a~controlled way.  This permits accurate approximations
to the diffraction spectra. Due to the underlying Weyl mean
quasiperiodicity, we know that the superposition property holds on the
level of FB amplitudes~\cite{LSS-long}, so that an arbitrary set of
weights for the tile control points can be chosen. We present some
characteristic examples, calculated with a standard computer algebra
system, which uses exact integer arithmetic up to the final
(numerical) evaluation of the Fourier cocycle. \smallskip
	
The paper is organised as follows.  We first consider a guiding
example in one dimension to demonstrate the main techniques, which we
believe will also be of independent interest.  Section~\ref{sec:Hats}
concerns the Hat family of tilings.  We first recall the
cut-and-project description of the self-similar member of this family,
namely the CAP tiling~\cite{BGS}, for which we derive the diffraction
using the Fourier cocycle approach.  By utilising the description of
the Hat family of tilings as a reprojection (see for example
\cite{KelSad}) of the CAP tiling, we then obtain the diffraction for
the Hat tiling itself.  In Section~\ref{sec:spec}, we present the
analogous approach for the Spectre tiling, this time starting form the
self-similar CASPr tiling~\cite{BGS2}, once again followed by a
reprojection to cover a Delone set that is MLD with the Spectre
tiling.

\section{A guiding example in one dimension}\label{sec:example}

This section is meant to illustrate the methods and results that we
later apply to the Hat and the Spectre tilings, so we keep the
exposition informal. Even though some of the results are new, they all
follow from methods and tools that are known. As we proceed, we also
introduce some of the notation we later need, where~\cite{TAO} is our
guiding reference.
    
\subsection{A twisted silver mean inflation}
    
Consider the primitive, binary substitution rule
\begin{equation}
	\vrho: \qquad a\,\mapsto \, abb, \quad b\, \mapsto \, ab, 
	\label{eq:subst1}
\end{equation}
which we write more concisely as
$\vrho = \bigl(\vrho(a),\vrho(b)\bigr) = (abb,ab)$ from now on (and
analogously for other substitutions). The rule is extended to words
via the usual homomorphism property of $\vrho$~\cite[Ch.~4]{TAO}.  It
has the substitution matrix
\begin{equation}
	M \, = \, \begin{pmatrix}
		1&1\\2&1 \end{pmatrix}
		\label{eq:substmatrix}
\end{equation}
with \emph{Perron--Frobenius} (PF) eigenvalue $\lambda = 1+ \sqrt{2}$,
which is a \emph{Pisot--Vijayaraghavan} (PV) unit, and corresponding
left and right eigenvectors
\begin{equation}
  \langle u | \, = \, \bigl(\sqrt{2},1\bigr) \quad \mbox{and} \quad
  |v\rangle \,=\, \lambda^{-1}\bigl(1,\sqrt{2}\,\bigr)^{\top}
  \,= \, \bigl(\lambda-2,3-\lambda\bigr)^{\top}.
\label{eq:eigenvectors}
\end{equation}
Here, the right eigenvector is frequency (or statistically)
normalised, so that $\lambda-2$ and $3-\lambda$ are the relative
frequencies of the letters $a$ and $b$ in the bi-infinite fixed point
$w = \vrho(w)$, where
$w\, =\, \dots w^{}_{-2}\, w^{}_{-1}\, |\, w^{}_{0}\, w^{}_{1}\,
w^{}_{2} \dots$ is the limit of the iteration sequence
\[
  b|a\, \longmapsto\, ab|abb \, \longmapsto \, abbab|abbabab \,
  \longmapsto \, \cdots \, \longrightarrow \, w\,=\,\vrho(w).
\]
The marker $|$ is kept as the reference point; compare
\cite[Ch.~4]{TAO} for details.
	
The word $w$ is repetitive, which means that all finite subwords of
$w$ reappear with bounded gaps, see \cite[Sec.~4.2]{TAO} for
details. The corresponding \emph{discrete} hull,
\begin{equation}\label{eq:hull}
  \XX \, = \, \XX(w)\, = \, \overline{\left\{S^n w \, :
               \, n\in \ZZ \right\}},	
\end{equation}
is the closure of the shift orbit of $w$ in the product topology of
$\left\{a,b\right\}^{\ZZ}$. Here, $S$ denotes the usual left shift,
$(Sw)^{}_{m} \,=\,w^{}_{m+1}$.  The hull $\XX$ is a closed, minimal
subshift of $\left\{a,b\right\}^{\ZZ}$, which is aperiodic
\cite[Thm.~4.6]{TAO}. A central goal in the theory of aperiodic order
is to determine the dynamical and diffraction spectra of $\XX$, where
the latter refers to the support of the diffraction image of a generic
element of $\XX$ (which can be any in this case, due to
minimality). For this, a geometric approach is possible.  Since this
is not widely known, we briefly recall it here.

\begin{figure}
\tikzset{every picture/.style={line width=0.75pt}}        
\begin{tikzpicture}[x=0.75pt,y=0.75pt,yscale=-0.8,xscale=0.8]
  \draw [draw opacity=0][fill={rgb, 255:red, 0; green, 0; blue, 255 }
  ,fill opacity=0.7 ] (20,50) -- (80,50) -- (80,70) -- (20,70) --
  cycle ;
			
  \draw [line width=1.5] (20,50) -- (20,70) ; \draw [line width=1.5]
  (80,50) -- (80,70) ;
			
  \draw [draw opacity=0][fill={rgb, 255:red, 255; green, 0; blue, 32 }
  ,fill opacity=0.7 ] (20,20) -- (104,20) -- (104,40) -- (20,40) --
  cycle ; \draw [line width=1.5] (20,20) -- (20,40) ; \draw [line
  width=1.5] (104,20) -- (104,40) ;
			
  \draw [draw opacity=0][fill={rgb, 255:red, 0; green, 0; blue, 255 }
  ,fill opacity=0.7 ] (166,50) -- (310,50) -- (310,70) -- (166,70) --
  cycle ; \draw [line width=1.5] (166,50) -- (166,70) ; \draw [line
  width=1.5] (310,50) -- (310,70) ;
			
  \draw [draw opacity=0][fill={rgb, 255:red, 255; green, 0; blue, 32 }
  ,fill opacity=0.7 ] (166,20) -- (370,20) -- (370,40) -- (166,40) --
  cycle ; \draw [line width=1.5] (166,20) -- (166,40) ; \draw [line
  width=1.5] (370,20) -- (370,40) ;
			
  \draw [draw opacity=0][fill={rgb, 255:red, 255; green, 0; blue, 32 }
  ,fill opacity=0.7 ] (436,20) -- (520,20) -- (520,40) -- (436,40) --
  cycle ; \draw [line width=1.5] (436,20) -- (436,40) ; \draw [line
  width=1.5] (520,20) -- (520,40) ;
			
  \draw [draw opacity=0][fill={rgb, 255:red, 0; green, 0; blue, 255 }
  ,fill opacity=0.7 ] (580,20) -- (640,20) -- (640,40) -- (580,40) --
  cycle ; \draw [line width=1.5] (580,20) -- (580,40) ; \draw [line
  width=1.5] (640,20) -- (640,40) ;
			
  \draw [draw opacity=0][fill={rgb, 255:red, 0; green, 0; blue, 255 }
  ,fill opacity=0.7 ] (520,20) -- (580,20) -- (580,40) -- (520,40) --
  cycle ; \draw [line width=1.5] (520,20) -- (520,40) ; \draw [line
  width=1.5] (580,20) -- (580,40) ;
			
  \draw [draw opacity=0][fill={rgb, 255:red, 255; green, 0; blue, 32 }
  ,fill opacity=0.7 ] (436,50) -- (520,50) -- (520,70) -- (436,70) --
  cycle ; \draw [line width=1.5] (436,50) -- (436,70) ; \draw [line
  width=1.5] (520,50) -- (520,70) ;
			
  \draw [draw opacity=0][fill={rgb, 255:red, 0; green, 0; blue, 255 }
  ,fill opacity=0.7 ] (520,50) -- (580,50) -- (580,70) -- (520,70) --
  cycle ; \draw [line width=1.5] (520,50) -- (520,70) ; \draw [line
  width=1.5] (580,50) -- (580,70) ;
			
  \draw (56,25) node [anchor=north west][inner sep=0.75pt]
  {$\textcolor[rgb]{1,1,1}{a}$}; \draw (46,51.4) node [anchor=north
  west][inner sep=0.75pt] {$\textcolor[rgb]{1,1,1}{b}$}; \draw
  (476,25) node [anchor=north west][inner sep=0.75pt]
  {$\textcolor[rgb]{1,1,1}{a}$}; \draw (546,22.4) node [anchor=north
  west][inner sep=0.75pt] {$\textcolor[rgb]{1,1,1}{b}$}; \draw
  (606,22.4) node [anchor=north west][inner sep=0.75pt]
  {$\textcolor[rgb]{1,1,1}{b}$}; \draw (476,54) node [anchor=north
  west][inner sep=0.75pt] {$\textcolor[rgb]{1,1,1}{a}$}; \draw
  (546,51.4) node [anchor=north west][inner sep=0.75pt]
  {$\textcolor[rgb]{1,1,1}{b}$};
			
  \draw (120,60) -- (147,60) ; \draw [shift={(150,60)}, rotate = 180]
  [fill={rgb, 255:red, 0; green, 0; blue, 0 } ][line width=0.08] [draw
  opacity=0] (10.72,-5.15) -- (0,0) -- (10.72,5.15) -- (7.12,0) --
  cycle ; \draw (120,30) -- (147,30) ; \draw [shift={(150,30)}, rotate
  = 180] [fill={rgb, 255:red, 0; green, 0; blue, 0 } ][line
  width=0.08] [draw opacity=0] (10.72,-5.15) -- (0,0) -- (10.72,5.15)
  -- (7.12,0) -- cycle ; \draw (390,30) -- (417,30) ; \draw
  [shift={(420,30)}, rotate = 180] [fill={rgb, 255:red, 0; green, 0;
    blue, 0 } ][line width=0.08] [draw opacity=0] (10.72,-5.15) --
  (0,0) -- (10.72,5.15) -- (7.12,0) -- cycle ; \draw (390,60) --
  (417,60) ; \draw [shift={(420,60)}, rotate = 180] [fill={rgb,
    255:red, 0; green, 0; blue, 0 } ][line width=0.08] [draw
  opacity=0] (10.72,-5.15) -- (0,0) -- (10.72,5.15) -- (7.12,0) --
  cycle ;
\end{tikzpicture}
\vspace*{-2mm}
\caption{The geometric inflation rule with prototiles of the natural
  lengths as induced by the substitution $\vrho$. \label{fig:inflrule}
}
\end{figure}
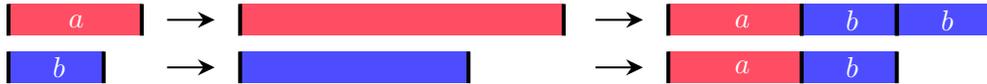

As an intermediate step, we consider the self-similar inflation rule
shown in Figure~\ref{fig:inflrule}.  The rule is induced by the action
of $\vrho$ on two prototiles in the form of intervals of length
$\sqrt{2}$ (for $a$) and 1 (for $b$), which are chosen from the
entries of $\langle u|$ in \eqref{eq:eigenvectors}. This turns the
(symbolic) fixed point $w$ into a self-similar aperiodic tiling of
$\RR$ with two types of intervals.
	
If $\cT^{}_{w}$ denotes the tiling induced by $w$, with $w^{}_0$
labelling the corresponding interval starting at 0, the
\emph{continuous} hull is
\begin{equation}
  \YY\,\defeq \, \overline{\left\{t+\cT^{}_{w} \, : \, t\in \RR\right\}},
\label{eq:conthull}
\end{equation} 
where $t+\cT^{}_{w}$ is the translate of $\cT^{}_{w}$ by $t$ and the
closure is taken in the local topology. In this topology, two tilings
are $\varepsilon$-close if they agree on the interval
$\bigl(-\tfrac{1}{\varepsilon},\tfrac{1}{\varepsilon}\bigr)$, possibly
after a~global translation of one of them by at most $\varepsilon$. As
is well known for primitive inflation rules on a finite prototile set,
$\YY$ is compact, and the translation action is continuous in the
local topology, so $(\YY,\RR)$ defines a \emph{topological dynamical
  system} (TDS). Since $\vrho$ is primitive, where we use $\vrho$ both
for the substitution \eqref{eq:subst1} and for the inflation in
Figure~\ref{fig:inflrule}, $\YY$ is minimal and possesses a unique
translation-invariant probability measure, say $\mu$, so that we can
look at the TDS $(\YY,\RR)$ in comparison with its unique counterpart,
the measure-theoretic dynamical system $(\YY,\RR,\mu)$. We first
determine its dynamical spectrum and then derive that of $(\XX,\ZZ)$
from it.

\subsection{Embedding and diffraction}
	
It is advantageous to represent any $\cT\in \YY$ by a Delone set that
emerges from $\cT$ by taking the left endpoint of each
interval. Clearly, the tilings and the derived point sets are MLD, see
\cite[Sec.~5.2]{TAO} for background on this concept, so we can work
with the Delone sets as a representative of the topological conjugacy
class defined by $\YY$. If $\vL = \vL^{}_a \, \dot{\cup} \, \vL^{}_b $
is the point set from the fixed point tiling $\cT^{}_{w}$, where we
keep track of the point type according to the corresponding interval
type, the fixed point property can be written as
\begin{equation}\label{eq:EMS}
\begin{split}
  \vL^{}_a \, & = \, \lambda \vL^{}_a \ \dot{\cup} \
  \lambda \vL^{}_b \, = \, \lambda \vL, \\[1mm]
  \vL^{}_b \, & = \, \lambda \vL^{}_a\! +\! \sqrt{2} \
  \ \ \dot{\cup} \ \ \lambda \vL^{}_a \! +\! (1{+}\sqrt{2}) \ \
  \dot{\cup} \ \ \lambda\vL^{}_b \! +\! \sqrt{2} \\[1mm]
  & = \, \lambda \vL \! +\!  \sqrt{2} \ \ \dot{\cup} \ \
  \lambda\vL^{}_a \!+\! \lambda.
\end{split}
\end{equation}
The right-hand side encodes the points of $\vL^{}_{a}$ and
$\vL^{}_{b}$ via the inflated versions of them, compare
\cite[Ch.~5]{Bernd}. More compactly, after renaming $a$ and $b$ by $1$
and $2$, this is
\[
  \vL_i \, = \bigcup_{j=1}^{2} \, \bigcup_{t \in T^{}_{ij}} \lambda
  \vL_j + t \ts ,
\]
with translations $t$ that are the entries of the set-valued
\emph{displacement matrix }
\[
  T\,=\, \begin{pmatrix} \{0\} & \{0\} \\[2mm]
    \{\sqrt{2},1+\sqrt{2}\,\} & \{\sqrt{2}\,\} \end{pmatrix},
\]
where $T_{ij}$ is the set of relative translations of all tiles of
type $i$ in a supertile of type $j$. We note that
$M_{ij} = \card (\,T_{ij})$.
	
By construction, all points of $\vL$ lie in $\ZZ[\sqrt{2}\,]$, the
ring of integers in the quadratic field $\QQ(\sqrt{2}\,)$, as do their
differences, which generate the return module. While $\ZZ[\sqrt{2}\,]$
is a dense subset of $\RR$, its Minkowski embedding into $\RR^2$, as
given by
\[
  \cL \,= \, \left\{(x,x^{\star}) \, : \, x\in\ZZ[\sqrt{2}\,] \right\}
  \, = \, \left\{ m\left(\begin{smallmatrix}
        1\\[1mm]1 \end{smallmatrix} \right) + n
    \left(\begin{smallmatrix} \sqrt{2}\\
        -\sqrt{2} \end{smallmatrix} \right) \, : \, m,n \in \ZZ
  \right\},
\]
is a lattice, where $\star$ denotes the non-trivial algebraic
conjugation in $\QQ(\sqrt{2}\,)$, which is the unique $\QQ$-linear
mapping defined by $\sqrt{2} \mapsto -\sqrt{2}$. Altogether, we have
an example of a Euclidean CPS, namely
\begin{equation}\label{eq:CPS}
\renewcommand{\arraystretch}{1.2}
  \begin{array}{ccccc@{}l}
    \RR & \xleftarrow{\;\;\; \pi \;\;\; }
    & \ \ \RR \nts\nts \times \nts\nts \RR^{}_{\inte}
    & \xrightarrow{\;\: \pi^{}_{\text{int}} \;\: }
    & \ \RR^{}_{\inte} & \\
    \cup & & \cup & & \cup  & \hspace*{-2ex}
                              \raisebox{1pt}{\text{\scriptsize dense}} \\
    \pi (\cL) & \xleftarrow{\;\,\ts 1:1 \;\,\ts } & \cL &
          \xrightarrow{ \qquad } &\pi^{}_{\text{int}} (\cL) & \\
    \| & & & & \| & \\ L = \ZZ[\sqrt{2}\,] &
          \multicolumn{3}{c}{\xrightarrow{\qquad\quad\qquad \,  \star \,
                                             \qquad\quad\qquad\ts}}
    &  {L_{}}^{\star\nts} = \ZZ[\sqrt{2}\,]  &  \end{array}
\renewcommand{\arraystretch}{1}
\end{equation}
which goes back to the work of Meyer~\cite{Mey72} and Moody
\cite{Moo97}. We will consistently keep track of which space is the
internal space by a subscript.
	
The structure of the CPS suggests replacing the difficult expansive
system of equations \eqref{eq:EMS}, for which no general solution
theory is known, by their $\star$-mapped version, namely
\begin{equation}\label{eq:IFS1}
\begin{split}
  W^{}_a \, & = \, \lambda^{\star} W^{}_a \ \cup \
  \lambda^{\star} W^{}_b \, ,  \\[1mm]
  W^{}_b \, & = \, \lambda^{\star} W^{}_a - \sqrt{2} \ \cup \
  \lambda^{\star} W^{}_a + \lambda^{\star} \ \cup \ \lambda^{\star}
  W_b - \sqrt{2} \ts ,
\end{split}
\end{equation}
where $W^{}_a$ and $W^{}_b$ are the closures of $\vL^{\star}_a$ and
$\vL^{\star}_b$, respectively, in the topology of
$\RR^{}_{\inte}$. Due to having taken the closure, the unions on the
right-hand sides need no longer be disjoint. The reason for this step
is that, due to $|\lambda^{\star}|<1$ (the PV property of $\lambda$),
Eq.~\eqref{eq:IFS1} defines a \emph{contractive} iterated function
system (IFS) on $\bigl(\cK\RR^{}_{\inte} \bigr)^2$, where
$\cK\RR^{}_{\inte}$ is the space of all non-empty, compact subsets of
$\RR^{}_{\inte}$, equipped with the Hausdorff
metric~\cite{Hutch81,Wicks}. So, by Banach's contraction principle,
there is a \emph{unique} pair $(W^{}_a,W^{}_b)$ of non-empty compact
sets that solves~\eqref{eq:IFS1}, and one can check explicitly that
they are given by
\begin{equation}\label{eq:IFS1_solution}
  W^{}_a \, = \, \Bigl[\myfrac{\sqrt{2}}{2} -1,\,
  \myfrac{\sqrt{2}}{2} \, \Bigr] \qquad \mbox{and} \qquad
  W^{}_{b}\,=\, \Bigl[-1-\myfrac{\sqrt{2}}{2},\,
  \myfrac{\sqrt{2}}{2}-1 \Bigr],
\end{equation} 
where
$W = W^{}_a \cup W^{}_b = \bigl[
-1-\tfrac{\sqrt{2}}{2},\tfrac{\sqrt{2}}{2}\,\bigr]$ is the union of
both, with
$ W^{}_a \cap W^{}_b = \bigl\{\tfrac{\sqrt{2}}{2}-1 \bigr\}$.
	
For a relatively compact $U\subset \RR^{}_{\inte}$, the point
set
\[
  \oplam (U) \, \defeq \, \{x\in L \, : \, x^{\star} \in U \}
\]
is called a \emph{cut-and-project set}. If $U$ has a non-empty
interior, it is a \emph{model set}. Such a model set is called
\emph{regular} if $\partial U$ has Lebesgue measure 0, and
\emph{proper} if $U$ is compact and the closure of its
interior. Therefore, $\oplam(W^{}_a)$ and $\oplam(W^{}_b)$ are proper,
regular model sets, as is $\oplam(W)$. As such, they all define
dynamical systems with pure-point dynamical spectrum and continuous
eigenfunctions~\cite{TAO,Lenz09}.
	
By construction, we know that $\vL^{}_a \subseteq \oplam(W^{}_a)$ and
$\vL^{}_b \subseteq \oplam(W^{}_b)$, as well as
$\vL^{} \subseteq \oplam(W)$. Moreover, in all three cases, the two
sets have the same density. For $\vL$, we obtain
\[
  \dens(\vL) \, = \, \langle u | v \rangle ^{-1}
  \, = \, \myfrac{2+\sqrt{2}}{4} \, = \, \myfrac{\lambda+1}{4}.
\]
With $\dens(\cL) = \tfrac{\sqrt{2}}{4}$, the uniform distribution
property in the window for regular model sets~\cite{Moody,Schl98}
gives
\[
  \dens\bigl(\oplam(W)\bigr) \, = \, \dens(\cL)\ts \vol(W)
  \, = \, \myfrac{2+\sqrt{2}}{4} \ts .
\]
Since the boundary points of $W^{}_a$ and $W^{}_b$ do not lie in
$L^{\star} = \ZZ[\sqrt{2}\,]$, we obtain
\[
  \vL^{}_a \, = \, \oplam(W^{}_a), \quad \vL^{}_b
  \,=\,\oplam(W^{}_b) \quad \mbox{and} \quad
  \vL \,=\,\oplam(W) \ts .
\]

We are now in the position to determine the spectrum of the tiling
$\cT^{}_w$ both in the dynamical and the diffraction sense, where the
dynamical spectrum is the group generated by the support of the
diffraction measure~\cite{BL}.  The general theory of diffraction of
regular model sets is well-known; see~\cite[Ch.~9]{TAO} for a detailed
survey. To determine it explicitly, some work is necessary. First, one
has to find the support, the Fourier module $L^{\circledast}$, and
then the intensities of the Bragg peaks on $L^{\circledast}$. The
second step is usually done by computing the FB coefficients of the
structure and then taking the squares of their absolute values.
	
The Fourier module $L^{\circledast}$ can be obtained from the dual
lattice $\cL^{\ast}$ by taking its $\pi$-projection. In our guiding
example, we have
\begin{equation}\label{eq:Spectrum_SM}
  L^{\circledast} \,= \, \pi(\cL^{\ast}) \,= \,
  \myfrac{\sqrt{2}}{4}\,\ZZ[\sqrt{2}\,]. 
\end{equation}

\noindent
While non-generic extinctions are possible, so that the FB amplitudes
for some or even infinitely many elements of $L^{\circledast}$ vanish,
no generic weighting of the two point types will lead to a proper
subgroup of $L^{\circledast}$; see \cite[Rem.~9.10]{TAO} for a related
discussion. $L^{\circledast}$ thus is the smallest (additive) group
that contains all locations with non-trivial Bragg peaks.
	
Now, suppose that one assigns weights $\alpha,\, \beta\in \CC$ to all
points of type $a$ and $b$, respectively. Then, the diffraction
measure $\widehat{\gamma}$ reads
\begin{equation}\label{eq:diff}
  \widehat{\gamma} \, = \, \sum_{k\in L^{\circledast}}\bigl|
  \alpha A^{}_{\vL_a}(k) + \beta A^{}_{\vL_b}(k) \bigr|^2 \delta_k
  \, = \, \sum_{k\in L^{\circledast}}I^{}_{\vL}(k)\, \delta_k,
\end{equation}
where the amplitudes $A^{}_{\vL_i}(k)$ are the FB amplitudes or
\emph{coefficients} defined by
\[
  A^{}_{\vL^{}_{i}}(k) \, \defeq \, \lim_{r\to\infty} \myfrac{1}{2r}
  \sum_{\substack{x\in \vL^{}_{i}\\ |x|\, \leqslant \,r}} \ee^{-2\pi \ii k x } .
\]
The limits exist for all $k\in \RR$ (see~\cite{BH} for a recent
elementary proof), and are given by
\begin{equation}\label{eq:FBcoeff}
  A^{}_{\vL_i}(k) \, = \,\begin{cases}  H^{}_{i}(k^{\star}),
    & \mbox{if}\ k\in L^{\circledast},\\
	0, &\mbox{otherwise}, \end{cases}
\end{equation}
where
\[
  H^{}_{i}(k^{}_{\inte}) \, =\, \frac{\dens(\vL)}{\vol(W)}
  \ \widecheck{\bm{1}^{}_{W^{}_i}}(k^{}_{\inte})
\] 
is the (scaled) inverse Fourier transform of the characteristic
function of the window $W^{}_i$.

The amplitudes can be calculated explicitly for
$k\in L^{\circledast}$, where one obtains
\begin{equation}\label{eq:FB}
\begin{split}
  H^{}_{a}(k^{}_{\inte}) &\,=\, \myfrac{\sqrt{2}}{4}
  \int_{\frac{\sqrt{2}}{2}-1}^{\frac{\sqrt{2}}{2}} \ee^{2\pi\ii k^{}_{\inte}y}
  \dd y \, = \, \myfrac{\sqrt{2}}{4} \ee^{\pi\ii k^{}_{\inte}(\lambda -2)}
  \, \sinc(\pi k^{}_{\inte})\quad \mbox{and} \\[1.5mm]
  H^{}_{b}(k^{}_{\inte}) &\, = \, \myfrac{\sqrt{2}}{4}
  \int_{\frac{-\sqrt{2}}{2}-1}^{\frac{\sqrt{2}}{2}-1} \ee^{2\pi\ii k^{}_{\inte}y}
  \dd y \, = \, \myfrac{1}{2} \ee^{-2\pi\ii k^{}_{\inte}}
  \sinc\bigl(\pi k^{}_{\inte}(\lambda-1)\bigr)\ts ,
\end{split}
\end{equation}
with $\sinc(z) = \tfrac{\sin(z)}{z}$. These amplitudes also constitute
a set of eigenfunctions for the Koopman operator acting on $\YY$,
because they satisfy
\[
  A^{}_{t+\vL^{}_i}(k) \, = \, \ee^{-2\pi\ii k t} A^{}_{\vL^{}_i}(k)\ts
\]
for any $t\in\RR$. Thus, unless they vanish, the FB coefficients are
eigenfunctions for the dynamical eigenvalue $\ee^{-2\pi\ii k t}$ with
$k\in L^{\circledast}$, which adds important dynamical information.

\begin{figure}
\includegraphics[width=0.85\textwidth]{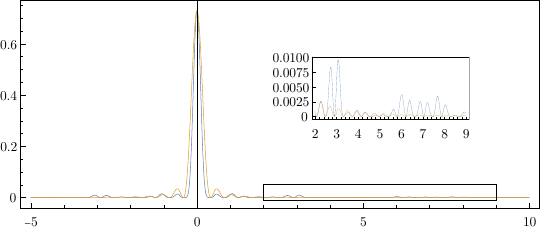}
\caption{Diffraction intensities for model sets
    with equal weights belonging to
    the substitutions $\varrho$, and $\widetilde{\vrho}$ from
    Section~\ref{sec:variation} (blue depicts
    $|\widetilde{H}(k^{}_{\inte})|^2$ and
    yellow $|H(k^{}_{\inte})|^2$). The intensity of the central
    peak is the same in both cases and given by the point set density,
     $|\widetilde{H}(0)|^2 = |H(0)|^2 = \dens(\vL)^2 =
    \tfrac{\lambda^2}{8} \approx 0.72855\dots$. For the approximation
    of $|\widetilde{H}(k^{}_{\inte})|^2$, the cocycle was used
    with $n=20$; see Section~\ref{sec:variation} for further details.
    \label{fig:intensities}}
\end{figure}

\begin{figure}
\includegraphics[width=0.85\textwidth]{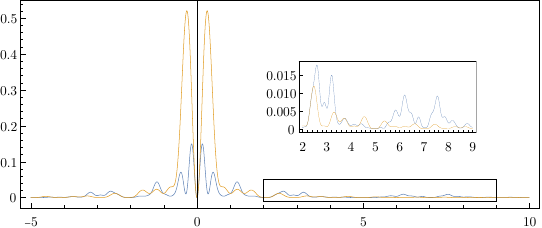}
\caption{Diffraction intensities for model sets arising from
  the substitutions $\varrho$, and $\widetilde{\vrho}$ from
  Section~\ref{sec:variation} with weighting (blue for
  $|\widetilde{H}(k^{}_{\inte})|^2$ and yellow for
  $|H(k^{}_{\inte})|^2$) for weighted sets $\widetilde{\vL}$ and $\vL$
  with weights $\alpha = \sqrt{2}$ and $\beta = -1$. The weights are
  chosen so that the intensity of the central peak vanishes, so
  $|\widetilde{H}(0)|^2 = |H(0)|^2 = 0$. For the approximation of
  $|\widetilde{H}(k^{}_{\inte})|^2$, the cocycle was again used with
  $n=20$.  \label{fig:intensitiesZeroCP}}
\end{figure}

Let us define $H = \alpha H^{}_{a} + \beta H^{}_{b}$ for the total
amplitude of the weighted system.  Choosing $\alpha = \beta = 1$, the
total intensity $I^{}_{\vL}(k)$ reads for all $k\in L^{\circledast}$
\[
  I^{}_{\vL}(k) \,= \, |H(k^{\star})|^2 \, =\,
  \myfrac{\lambda^2}{8}\sinc^2(\pi \lambda k^{\star}) \ts .
\]
For general weights $\alpha,\, \beta \in \CC$, the total intensity
becomes for all $k\in L^{\circledast}$
\begin{equation*}
\begin{split}
  I^{}_{\vL}(k) & = \, |H(k^{\star})|^2 \, =\, \myfrac{|\alpha|^2}{8}
  \sinc^2\bigl(\pi k^{\star}\bigr) + \myfrac{|\beta|^2}{8}
  \sinc^2\bigl(\pi k^{\star}(\lambda-1)\bigr)  \\
  & \quad + \myfrac{\sqrt{2}}{4}|\alpha\overline{\beta}|\,
  \cos(\pi k^{\star} \lambda + \phi)\,\sinc\bigl(\pi k^{\star}\bigr)
  \,\sinc\bigl(\pi k^{\star}(\lambda-1)\bigr)
\end{split}
\end{equation*}
with $\phi = \arg(\alpha\overline{\beta})$. These functions are shown
in Figures~\ref{fig:intensities} and
\ref{fig:intensitiesZeroCP}. Thus, we have an explicit formula for the
diffraction measure~$\widehat{\gamma}$ from Eq.~\eqref{eq:diff}.

\subsection{Shape change and reprojection}

So far, we have determined the spectra of the self-similar version of
the (geometric) binary sequence. To return to the original symbolic
binary sequence, we employ a shape change, all in the framework of
\emph{deformed model sets}~\cite{BL05,BerDun00}. Here, the only
relevant shape change amounts to changing the lengths of the two
intervals in such a way that the overall point density remains the
same. Such shape changes, according to results of Clark and Sadun
\cite{CS1,CS2}, lead to topologically conjugate dynamical systems. We
note that for sufficiently nice model sets (Euclidean setting and
polygonal window), the converse also holds. In other words, all
topological conjugacies are (MLD with) reprojections, see
\cite{KelSad} for detailed treatment. The only remaining degree of
freedom is a global change of scale, which changes all spectral
properties in a controlled way.  So let us fix this scale. If the
tiles $a$ and $b$ have lengths $\ell_a$ and $\ell_b$, the point
density is given by $\lambda\cdot(\ell_a+\sqrt{2}\,\ell_b)^{-1}$,
which must be equal to $\dens(\vL)=\tfrac{1}{4}(2+\sqrt{2})$, so
\begin{equation}\label{eq:def_cond}
  \ell_a + \sqrt{2}\,\ell_b \, = \, 2\sqrt{2}.
\end{equation}
Here, $\ell_a=\sqrt{2}$ and $\ell_b=1$ correspond to $\vL$, while
choosing $\ell_a = \ell_b = 4-2\sqrt{2} \approx 1.17157\dots$ gives
equal lengths and thus a scaled version of the initial symbolic case
(embedded into $\RR$ by a suspension with a constant roof function).
As long as \eqref{eq:def_cond} is satisfied, the dynamical spectrum
remains the same due to topological conjugacy. Note that we always
keep track of the interval type, which guarantees aperiodicity also
when $\ell_a = \ell_b$, as we have it in the symbolic setting.

\begin{figure}
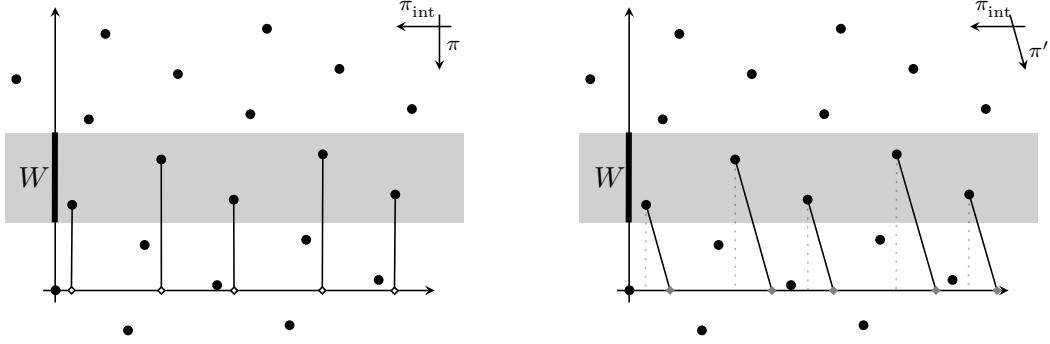

\centering
   \include{reprojection}
   \vspace*{-10mm}
   \caption{An illustration of the reprojection technique. While the
     left figure shows the initial cut-and-project scheme with two
     orthogonal projections $\pi$, $\pi^{}_{\inte}$, the right
     shows the change of the first projection. The strip
     $(\RR\times W) \cap \cL$ (shaded area) in the underlying space
     remains unchanged, but the resulting projection sets are
     different. \label{fig:reprojection}}
\end{figure}
   
To understand how the diffraction measure changes, we interpret the
shape change as a~reprojection from the same CPS and then apply the
formula for the diffraction of deformed model sets. The reprojection
takes the same lattice points as before, that is
$(\RR\times W) \ts \cap \ts \cL$, but projects them back to $\RR$ with
a different projection, say $\pi'$, as illustrated in Figure
\ref{fig:reprojection}. The projections are linear mappings, so they
can be represented by matrices. In our case, one has (in the standard
basis)
\[
  \pi \,=\, \bigl(1 \ \ 0\bigr), \qquad \pi^{}_{\inte} \,=\,
  \bigl(0 \ \ 1\bigr), \quad \mbox{and} \quad \pi'\,=\, \bigl(1 \ \
  \lambda^{-2}\bigr) \,=\, \bigl(1 \ \ 3{-}2\sqrt{2}\,\bigr).
\]
In particular,
\begin{equation}
  \pi'\left(\begin{smallmatrix}
      \sqrt{2}\\-\sqrt{2} \end{smallmatrix} \right)  \, = \,
  4-2\sqrt{2}, \qquad \mbox{and} \qquad 
  \pi'\left(\begin{smallmatrix}
      1\\[1mm] 1 \end{smallmatrix} \right) \, = \, 4-2\sqrt{2},
\label{eq:reprojection}
\end{equation}
as desired. Therefore, one can write the reprojected set $\vL'$ as
\[
  \vL' \, = \, \left\{ x + Dx^{\star} \, : \, x\in \vL \right\} \ts ,
\]
with the linear mapping $D: \RR^{}_{\inte} \rightarrow \RR$
being defined as $Dy = \lambda^{-2} y$ for all
$y\in \RR^{}_{\inte}$.
	
As mentioned before, the dynamical spectrum remains the same. Now, we
derive the new FB coefficients for $k\in L^{\circledast}$, the
amplitudes $A^{}_{\vL'}(k)$, from the definition,
\begin{equation}
\begin{split}
  A^{}_{\vL'}(k) & =\, \lim_{r\to\infty} \,\myfrac{1}{2r} \sum_{x\in \vL'_r}
  \ee^{-2\pi\ii k x} \, =\, \lim_{r\to\infty} \, \myfrac{1}{2r} \sum_{x\in \vL_r}
  \ee^{-2\pi\ii k (x+\lambda^{-2}x^{\star})} \\
  & =\, \lim_{r\to\infty}\, \myfrac{1}{2r} \sum_{x\in \vL_r}
  \ee^{2\pi\ii (k^{\star}x^{\star} - k\lambda^{-2}x^{\star} )} \\
  & =\, \myfrac{\dens(\vL)}{\vol(W)} \int_{W}
  \ee^{2\pi\ii (k^{\star} - k\lambda^{-2})y}  \dd y \, = \,
  \myfrac{\dens(\vL)}{\vol(W)} \,  \widecheck{\bm{1}^{}_{\,W}}\,
  (k^{\star} - k\lambda^{-2}),\\[2pt]
  & = \, H(k^{\star} - D^{\top}k),
\end{split}
\label{eq:FBdeformed}
\end{equation}
with $\vL_r \defeq \vL\cap [-r,r]$, and analogously for
$A^{}_{\vL'_{a}}$, and $A^{}_{\vL'_{b}}$.  We first used results from
\cite{BerDun00}, in the second row the fact that $kx$ is an algebraic
integer for all $k\in L^{\circledast}$ and all $x\in\vL$, and the
third line follows from standard equidistribution results in the
window; compare~\cite[Ch.~7]{TAO} or~\cite[Prop.~2.1]{Schl98}.  This
implies that the diffraction of the reprojected model set can be
computed from Eq.~\eqref{eq:FB}.
	
One has $\vL' = (4-2\sqrt{2}\,)\ZZ$. If the weights are chosen as
$\alpha= \beta = 1$, the diffraction measure is a periodic measure
supported on $\tfrac{2+\sqrt{2}}{4}\ZZ$, the dual lattice to
$(4-2\sqrt{2}\,)\ZZ$. Concretely, Eq.~\eqref{eq:FBdeformed} for the
amplitudes becomes
\[
  A^{}_{\vL'}(k) \, = \, \begin{cases}
    H'(k^{\star})\, \defeq \, \frac{\lambda +1}{4}, &\mbox{if} \
    k\in \frac{2+\sqrt{2}}{4}\,\ZZ, \\
    0, & \mbox{otherwise,}	\end{cases}
\]
which is the expected result. Note that the support of the diffraction
measure is $\tfrac{2+\sqrt{2}}{4}\,\ZZ$ and hence a rank-$1$ submodule
of the initial Fourier module $L^{\circledast}$ from
\eqref{eq:Spectrum_SM}.
	
If the weights $\alpha, \beta$ differ, the aperiodic structure
survives and is present in the diffraction picture, as one can see in
Figure~\ref{fig:DiffDefSM}. Moreover, one recovers the entire original
Fourier module $L^{\circledast}$. Nevertheless, since the weighted
point set is supported on a lattice, it follows from
\cite[Thm.~10.3.]{TAO} that the diffraction measure remains periodic
with its period given by the dual lattice $\tfrac{2+\sqrt{2}}{4}\ZZ$,
and the aperiodic nature manifests itself in the diffraction measure
restricted to the fundamental domain, for example to the interval
$[0,\tfrac{2+\sqrt{2}}{4})$.

\begin{figure}
\centering
\includegraphics[width=0.6\linewidth]{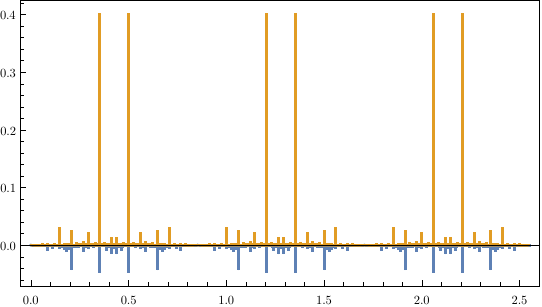}
\caption{Diffraction of the deformed model sets $\vL'$ (yellow/top)
  and $\widetilde{\vL}'$ (bottom/blue) with weights
  $\alpha = \sqrt{2}$ and $\beta = -1$. They are chosen so that the
  central peak vanishes. The figure shows the intensities at all
  points $k = \tfrac{\sqrt{2}}{4}(m+n\sqrt{2}\,)$ with
  $m,n\in \{-250,\,\dots,\, 250\}$ in three fundamental domains of the
  dual lattice $\tfrac{2+\sqrt{2}}{4}\ZZ$.}
\label{fig:DiffDefSM}
\end{figure}

\subsection{Some variations on the guiding example}\label{sec:variation}

The substitution matrix $M$ from \eqref{eq:substmatrix} is compatible
with six substitutions, namely with
\[
  (abb,ab),\ (bab,ba), \ (bba,ba),\ (bab,ab),
\]
which all define the same discrete hull $\XX$, and with the remaining
two,
\[
  (bba,ab)\ \, \mbox{and} \ \, (abb,ba),
\]
which form an enantiomorphic (or mirror) pair of systems. The first
claim easily follows from~\cite[Prop.~4.6]{TAO} in conjunction with
the palindromicity of $(abb,ab)$, compare~\cite[Lemma~4.5]{TAO}, while
the remaining two are different from the previous four (as they
contain $aa$ while the others do not), with hulls that are not
reflection symmetric (they differ in the occurrence of $abbaa$ versus
$aabba$, for example).
	
Let us thus take a closer look at $\widetilde{\vrho} = (bba,ab)$,
where we can construct a fixed point of $\widetilde{\vrho}\,^2$ from
the legal seed $a|a$ via
\[
  a|a\, \longmapsto\, bba|bba \, \longmapsto \, ababbba|ababbba \,
  \longmapsto\cdots \longmapsto \widetilde{w}\,\
  \longmapsto\,\widetilde{\vrho}(\widetilde{w})\,\longmapsto\,
  \widetilde{w}
\]
with $\widetilde{w} = \dots a|a \dots $. Here, $\widetilde{w}$ and
$\widetilde{\vrho}(\widetilde{w})$ are equal to the left of the marker
but differ on the right of it in a way that will show up later in more
detail. In other words, $\widetilde{w}$ and
$\widetilde{\vrho}(\widetilde{w})$ form an asymptotic pair; see
\cite[Ch.~4]{TAO} or~\cite{PyFo}.
	
The step from here to the Delone sets works in complete analogy to our
initial example. One can work with the CPS from \eqref{eq:CPS}, and
the displacement matrix for $\widetilde{\vrho}$ reads
\[
  \widetilde{T}\,=\, \begin{pmatrix}
  \{2\} & \{0\} \\[2mm] \{0,1\} & \{\sqrt{2}\,\} \end{pmatrix}.
\]
Strictly speaking, we should start with the displacement matrix for
$\widetilde{\vrho}\,^2$, as we have to deal with a fixed point of
$\widetilde{\vrho}\,^2$. However, one then finds that both fixed
points, after $\star$-map, lead to the same contractive IFS. This
means that we may work with the IFS induced by $\widetilde{\vrho}$
instead, which is simpler and reads
\begin{equation}\label{eq:IFS2}
\begin{split}
  \widetilde{W}^{}_a \, & = \, \lambda^{\star} \widetilde{W}^{}_a +2\
  \cup \ \lambda^{\star} \widetilde{W}^{}_b \ts,  \\[1mm]
  \widetilde{W}^{}_b \, & = \, \lambda^{\star} \widetilde{W}^{}_a \
  \cup \ \lambda^{\star} \widetilde{W}^{}_a + 1 \ \cup \
  \lambda^{\star} \widetilde{W}_b - \sqrt{2} \ts .
\end{split}
\end{equation}
This IFS is again contractive, so it defines a unique pair of compact
sets $(\widetilde{W}^{}_a,\widetilde{W}^{}_b)$ of positive Lebesgue
measure that solve \eqref{eq:IFS2}. However, this time,
$\widetilde{W}^{}_{a}$ and $\widetilde{W}^{}_{b}$ are not intervals
(as the substitutions rule is not invertible \cite{BEIR07}), but
Cantorvals~\cite{BGM-Cant}; they are topologically regular sets with a
boundary of Hausdorff dimension
\[
  \operatorname{dim}^{}_{\mathrm{H}} (\partial \widetilde{W}^{}_a) \,
  = \, \operatorname{dim}^{}_{\mathrm{H}} (\partial
  \widetilde{W}^{}_b) \, = \,
  \myfrac{\log(x_{\max})}{\log(\lambda)}\, \approx \,
  0.89745\dots
\]
with $x_{\max}$ the largest root of $x^3-2x^2-1$. A
visualisation of the windows $W_a$ and $W_b$ is presented in Figure
\ref{fig:window_sm}. The dimension can be computed in various ways;
compare~\cite{BGG,FFIW,Bernd,ST}. The boundaries have zero
Lebesgue measure~\cite[Cor.~6.66]{Bernd} where $\widetilde{W}^{}_{a}$
and $\widetilde{W}^{}_{b}$ have no interior points in common, though
they share many boundary points. These boundary points, in particular,
distinguish the two different fixed points $\widetilde{w}$ and
$\widetilde{\vrho}(\widetilde{w})$.

\begin{figure}
\includegraphics[width=0.9\textwidth]{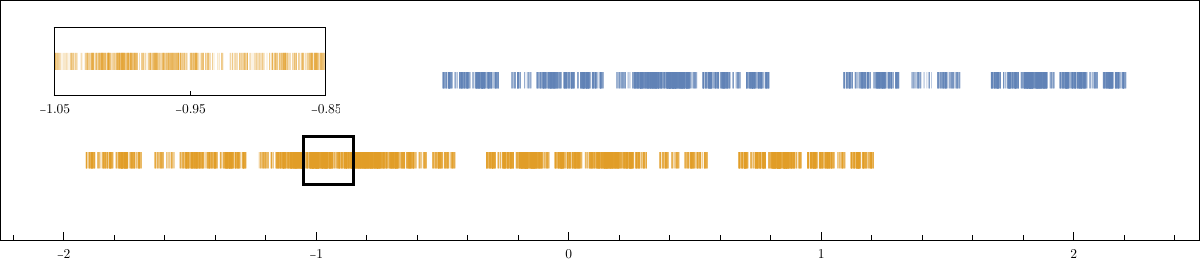}
\caption{The windows $\widetilde{W}^{}_a$ (blue/top) and
  $\widetilde{W}^{}_b$ (yellow/bottom) for the tiling given by
  $\widetilde{\vrho}$. The inlay shows a stretched view of the marked
  region. Note that both windows are subsets of $\RR$ and
  measure-theoretically disjoint, though this is almost impossible to
  illustrate due to the large Hausdorff dimension of their
  boundaries.}
  \label{fig:window_sm}
\end{figure}
	
The diffraction measure of
$\widetilde{\vL} = \widetilde{\vL}^{}_a \, \dot{\cup} \,
\widetilde{\vL}^{}_b$ with weights $\alpha,\beta \in \CC$, as above,
reads
\[
  \widehat{\gamma} \, = \, \sum_{k\in L^{\circledast}}\bigl|\alpha
  A^{}_{\widetilde{\vL}_a}(k) + \beta A^{}_{\widetilde{\vL}_b}(k)
  \bigr|^2 \delta_k
\]
with the same Fourier module $L^{\circledast}$ as before and the
non-zero FB coefficients read for all $k\in L^{\circledast}$ and
$i\in \{a,b\}$
\[
  A^{}_{\widetilde{\vL}^{}_i}(k) \, = \,
  \widetilde{H}^{}_{i}(k^{\star})
\]
with
$\widetilde{H}^{}_{i}(k^{}_{\inte}) =
\tfrac{\dens(\widetilde{\vL})}{\vol(\widetilde{W})} \
\widecheck{\bm{1}^{}_{\,\widetilde{W}^{}_i}}\,(k^{}_{\inte})$.  It is
difficult to calculate the Fourier transform of sets
like~$\widetilde{W}^{}_{i}$, which are \emph{Rauzy fractals}.
Fortunately, there exists a~method due to Baake and
Grimm~\cite{BG-Rauzy} based on a~cocycle approach. One defines the
\emph{internal Fourier cocycle}, which is a matrix cocycle induced by
the inflation as follows. First, consider the inverse Fourier
transform of the matrix of Dirac measures at positions given by the
entries of $\widetilde{T}^{\star}$. For
$k^{}_{\inte} \in \RR^{}_{\inte}$, the matrix elements are defined by
\[
  \underline{B}^{}_{ij}(k^{}_{\inte}) \, =\, \sum_{t \in
    \widetilde{T}^{}_{ij}}\ee^{2\pi\ii t^{\star} k^{}_{\inte}} ,
\]
which is abbreviated as $\underline{B}(k^{}_{\inte}) =
\widecheck{\delta_{\widetilde{T}^{\star}}}$. For $\widetilde{\vrho}$,
we have
\[
  \underline{B}(k^{}_{\inte}) \, = \, \begin{pmatrix}
    \ee^{4\pi\ii k^{}_{\inte}} & 1 \\[1mm]
    1+\ee^{2\pi \ii k^{}_{\inte}} & \ee^{-2\pi\ii
      k^{}_{\inte} \sqrt{2}} \end{pmatrix}\ts,
\]
where $\underline{B}(0)$ is the substitution matrix of
$\widetilde{\vrho}$.  Now, one defines the matrix cocycle for
$n\in\NN$ via
\[
  \underline{B}^{(n)}(k^{}_{\inte}) \, \defeq \,
  \underline{B}(k^{}_{\inte})\,\underline{B}(\lambda^{\star}k^{}_{\inte})\,
  \cdots\,
  \underline{B}\bigl((\lambda^{\star})^{n-1}k^{}_{\inte}\bigr),
\]
and, further, one considers the matrix function
$C(k^{}_{\inte})$
\begin{equation}\label{eq:matrix_limit}
  C(k^{}_{\inte}) \, \defeq \, \lim_{n\to\infty} \lambda^{-n}\,
  \underline{B}^{(n)}(k^{}_{\inte}).
\end{equation}
The function $C(k^{}_{\inte})$ is well defined and continuous,
as the sequence
$ \bigl(\lambda^{-n}\underline{B}^{(n)}(k^{}_{\inte})\bigr)_n$
converges compactly on $\RR$~\cite[Thm.~4.6]{BG-Rauzy}. Moreover, the
convergence of \eqref{eq:matrix_limit} is exponentially fast, which
makes it effectively computable to any desired precision. Note that
$C(k^{}_{\inte})$ is of rank smaller than or equal to $1$, so one
can represent it as
\[
  C(k^{}_{\inte}) \,= \, |\,c(k^{}_{\inte})\,\rangle
  \langle u | \ts ,
\]
with the left PF eigenvector $\langle u|$ from
\eqref{eq:eigenvectors}. It turns out that the vector of functions
$|\,c(k^{}_{\inte})\, \rangle$ has components
\[
  c_i(k^{}_{\inte}) \, = \, \eta\,
  \widecheck{\bm{1}^{}_{\,\widetilde{W}^{}_i}}\,(k^{}_{\inte})
  \qquad \mbox{for some } \ \eta>0,
\] 
which provides the desired quantities; see~\cite[Sec.~4]{BG-Rauzy}
for details.

\begin{figure}
\centering
\begin{subfigure}{.5\textwidth}
	\centering
	\includegraphics[width=.77\linewidth]{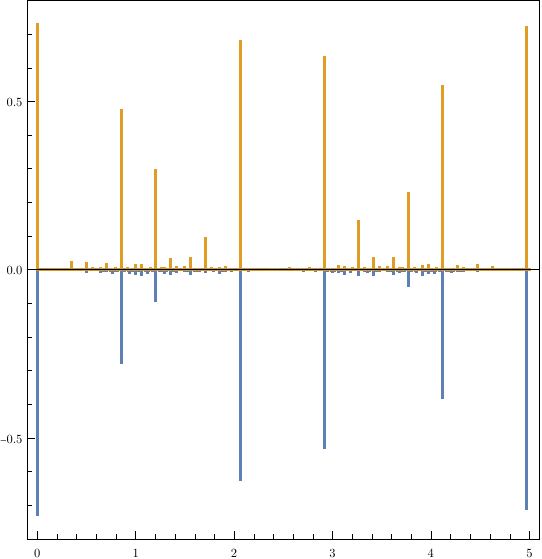}
	\caption{Equal weights $\alpha= \beta =1$.}
	\label{fig:DiffSM}
\end{subfigure}%
\begin{subfigure}{.5\textwidth}
	\centering
	\includegraphics[width=.77\linewidth]{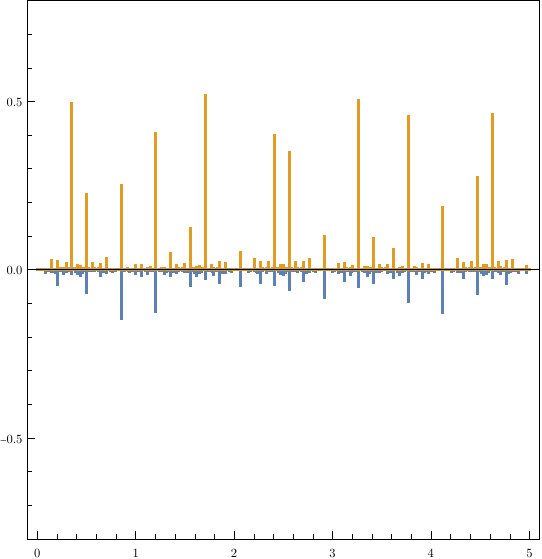}
	\caption{Weights $\alpha = \sqrt{2}$, $\beta =-1$.}
	\label{fig:DiffZeroCP}
\end{subfigure}
\caption{Diffraction images of the model sets $\vL$ and
  $\widetilde{\vL}$, with two different choices of weights. (A) shows
  the case of equal weights, while (B) depicts the diffraction when
  the weights are chosen such that the central peak at $0$
  vanishes. Both pictures show all Bragg peaks in positions
  $k\in L^{\circledast}\cap[0,5]$ with intensity~$\geqslant
  10^{-3}$.} \label{fig:DiffSMboth}
\end{figure}
	
\begin{figure}
\centering
\begin{subfigure}[b]{0.80\textwidth}
	\includegraphics[width=1\linewidth]{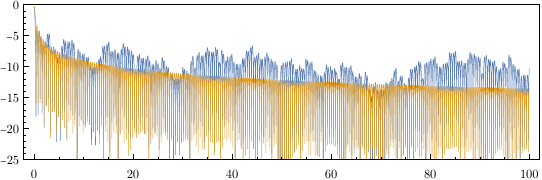}
	\caption{}
	\label{fig:LogInt1} 
\end{subfigure}
\begin{subfigure}[b]{0.80\textwidth}
	\includegraphics[width=1\linewidth]{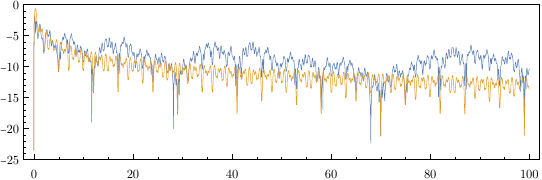}
	\caption{}
	\label{fig:LogInt2}
\end{subfigure}
\caption[Two logarithms of the intensities]{The functions
  $\log \bigl(|\widetilde{H}(k^{}_{\inte})k^{}_{\inte}|^2\bigr)$
  (blue) and $\log \bigl(|H(k^{}_{\inte})|^2\bigr)$ (yellow) and their
  values for $k\in [0,100]$. The values are cropped at -25 for
  presentation purposes. Graph~(A) shows the case with equal weights
  ($\alpha = \beta = 1$), whereas graph (B) depicts the case with
  $\alpha = \sqrt{2}$ and $\beta = -1$. Both graphs illustrate the
  slower decay of the intensities for the window with a boundary of
  non-trivial Hausdorff dimension.}
\end{figure}

Figure~\ref{fig:intensities} compares the continuous counterparts of
the intensity functions $|H(k^{}_{\inte})|^2$ and
$|\widetilde{H}(k^{}_{\inte})|^2$ with equal weights $\alpha= \beta=1$
and Figure~\ref{fig:DiffSM} shows the diffraction of both
structures. On the level of intensities, one can recognise that the
decay of $I_{\widetilde{\vL}}(k)$ is slower than that of
$I^{}_{\vL}(k)$, which supports the conjectured non-trivial relation
between the boundary dimension of the window and the decay rate of the
diffraction measure~\cite{LGJJ93}. Note that this point is a subtle
one, as it emerges from the \emph{boundary dimension} of a set that
itself has full dimension, and is thus not covered by the usual decay
estimates as given in \cite[Thm.~3.10]{Mat}. To illustrate the
significantly slower decay, we include plots of the logarithms of the
intensities on a larger scale in Figure~\ref{fig:LogInt1}.  If the
weights are chosen as $\alpha = \sqrt{2}$ and $\beta = -1$, the
central peak vanishes. Figure~\ref{fig:intensitiesZeroCP} shows the
diffraction intensities in such a case. Again, the slower decay can be
observed as in the previous case (compare Figure~\ref{fig:LogInt2}).
	
As above, we can recover the spectrum of the original symbolic
sequence $\widetilde{w}$ by a reprojection. Since we are using the
same CPS, the reprojection from \eqref{eq:reprojection} still applies,
and Eq.~\eqref{eq:FBdeformed} remains valid. The only difference is
the method for obtaining the Fourier transform of the window. If the
weights are both equal, one ends up with the lattice
$(4-2\sqrt{2}\,)\ZZ$ as before. Figure~\ref{fig:DiffDefSM} shows the
diffraction of the deformed model sets $\vL'$ and $\widetilde{\vL}'$
with weights chosen so that the central peak vanishes.

At this point, we hope that the reader is well prepared to embark on
the analogous programme in two dimensions, which we require to tackle
the Hat and the Spectre tilings.

\section{CAPs and Hats (and their relatives)}\label{sec:Hats}
 
Recall that the Hat tiling, discovered by David Smith and his
coauthors~\cite{Hat}, is an aperiodic tiling of the plane using a
disk-like prototile and its flipped version. Thus, it provides a
partial solution to the monotile problem. In fact, there exists a
continuum of monotiles related to the Hat, which have become known as
the Hat family of tilings.  Soon after this discovery, Baake,
G\"{a}hler and Sadun~\cite{BGS} showed that all elements of this
family give rise to topologically conjugate dynamical systems (up to
scale and rotations). In the topological conjugacy class, there exists
a self-similar relative of the Hat tiling called the CAP
tiling. Further, they proved that the CAP tiling is MLD to a Euclidean
model set and showed that the Hat tiling is a reprojection of the CAP
tiling, as is every other member of the Hat family (after choosing an
appropriate scale and orientation).
	
We aim to provide more details on these connections. In particular, we
derive the explicit reprojection and deformation mappings. Then, using
the cocycle approach, we calculate the diffraction and dynamical
spectrum of the CAP tiling and, consequently, the spectra of the Hat
tiling. Here, the cocycle method is required for this system because
its window has some parts with fractal boundary. In what follows, we
mimic the strategy of our one-dimensional guiding example from
Section~\ref{sec:example}. Where possible, we keep an informal style
and notation for better readability.

\subsection{The embedding of the CAP tiling}
	
Recall that the CAP tiling is built from 4 prototiles, each of which
appears in 6 orientations. Therefore, there are altogether 24
prototiles up to translations. Figure~\ref{fig:CAPsubst} shows the
substitution rule that can be turned into a proper stone inflation
rule with fractiles~\cite[Fig.~3]{BGS} and inflation factor $\tau^2$.
\begin{figure}
\centering
\begin{subfigure}{.75\textwidth}
	\centering
	\includegraphics[width=.65\linewidth]{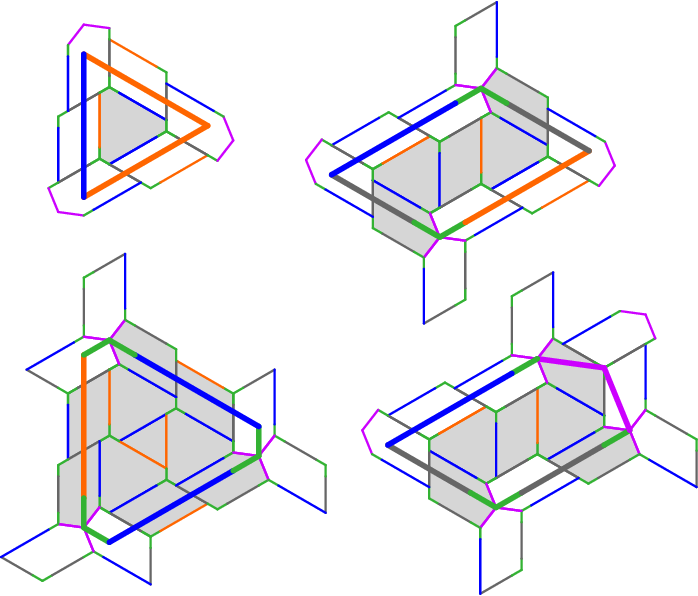}
\caption{}
	\label{fig:CAPsubst}
\end{subfigure}%
\begin{subfigure}{.25\textwidth}
	\centering
	\includegraphics[width=0.33\linewidth]{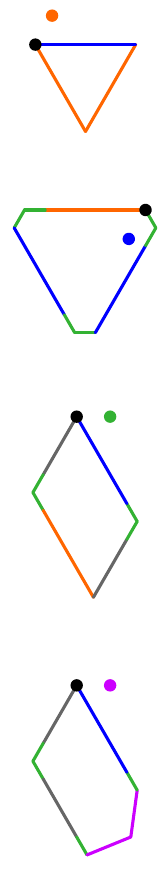}
\caption{}
	\label{fig:CAPcp}
	\end{subfigure}
        \caption{Panel (A) shows the inflation rule for the CAP
          tiling. Note that the grey polygons constitute the level-1
          supertile, whereas the white ones are uniquely determined by
          the grey patch so that the inflation rule is border
          forcing. Panel (B) shows the position of the control point
          for every tile; see~\cite{BGS} for details.}
		\label{fig:CAPsubstAll}
\end{figure}
	
Note that the control points of the tiles, as shown in
Figure~\ref{fig:CAPcp}, do not all lie inside the tiles. They are
chosen so that they form a single orbit under the translation action
of the return module. In~\cite{BGS}, it was derived that the return
module is the principal ideal in the ring $\ZZ[\tau,\xi]$ generated by
$(1-\xi)(\tau - \xi)$, with $\xi$ a primitive $6^{\mathrm{th}}$ root
of unity.  The return module (and hence the ideal
$(1-\xi)(\tau - \xi)\,\ZZ[\tau,\xi]$, which equals
$\tau^2(1-\xi)(\tau - \xi)\,\ZZ[\tau,\xi] = (3\tau
+2-\xi)\,\ZZ[\tau,\xi]$ as $\tau$ is a unit) is generated by 4
elements,
\begin{equation}\label{eq:generators}
\begin{split}
  \bm{u}^{}_1 & \,=\, 3\tau + 2- \xi, \\
  \bm{u}^{}_2 & \,=\, 2\tau+1-\tau\xi+\xi,\\
  \bm{u}^{}_3 & \,=\, 1+3\tau\xi+\xi \, = \, \xi \bm{u}^{}_1, \\
  \bm{u}^{}_4 & \,=\, \tau-1 + \tau\xi + 2\xi  \, = \, \xi \bm{u}^{}_2. \\
\end{split}
\end{equation}
	
The module $\cR^{}_{_{\CAP}} = \ZZ\bm{u}^{}_1 \oplus \cdots
\oplus \ZZ\bm{u}^{}_4$ can be lifted via a Minkowski embedding into
$\RR^4 \simeq \CC^2$ to obtain the lattice
$\cL' = \left\{(u, \, u^{\star}) \, : \, u \in
  \cR^{}_{_{\CAP}} \right\}$, with a $\star$-map that
follows from the Minkowski embedding. Let us explain this in more
detail. The generating vectors of $\cL'$ constitute the columns of a
matrix $V^{}_{\RR} \in \Mat(\RR,4)$ or
$V^{}_{\CC} \in \CC^{2\times4}$, which read
\begin{equation}\label{eq:latticeR}
  V^{}_{\RR} \, = \, \myfrac{1}{4}\begin{pmatrix}
    12+6\sqrt{5} & 9+3\sqrt{5} & 9+3\sqrt{5} & 3+3\sqrt{5}\\[1mm] 
    -2\sqrt{3} & \sqrt{3}-\sqrt{15} & 5\sqrt{3}+ 3\sqrt{15} &
    5\sqrt{3}+\sqrt{15} \\[1mm] 
    12-6\sqrt{5} & 9-3\sqrt{5} & 9-3\sqrt{5} & 3-3\sqrt{5}\\[1mm] 
    2\sqrt{3} & -\sqrt{3}+\sqrt{15} & -5\sqrt{3}+ 3\sqrt{15} &
    -5\sqrt{3}+\sqrt{15} \end{pmatrix}
\end{equation}
and
\begin{equation}\label{eq:latticeC}
  V^{}_{\CC} \, = \, \begin{pmatrix}
    3\tau+2-\xi & 2\tau+1-\tau\xi+\xi & 1+3\tau\xi+\xi &
    \tau-1 + \tau\xi + 2\xi \\
    -3\tau+4+\xi & -\tau +3 -\tau\xi & -3\tau +5 +3\tau\xi-4\xi &
    -2\tau+3+\tau\xi -3\xi \end{pmatrix},
\end{equation}
respectively.  We will tactically switch between the real and complex
descriptions. Using the lattice $\cL'$, we obtain the CPS
\begin{equation}\label{eq:CPSHat}
\renewcommand{\arraystretch}{1.2}
\begin{array}{ccccc@{}l}
  \RR^2\simeq \CC & \hspace*{-7ex}\xleftarrow{\quad \ \pi \ \quad }
  & \RR^2 \nts\nts \times \nts\nts \RR^2_{\inte}  \simeq \CC
    \nts\nts \times \nts\nts \CC^{}_{\inte}
  & \xrightarrow{\quad \pi^{}_{\text{int}} \quad }
  & \hspace*{-7ex} \RR^2_{\inte}  \simeq \CC^{}_{\inte}  & \\
  \cup & & \cup & & \hspace*{-8ex}\cup
  & \hspace*{-10ex}
  \raisebox{1pt}{\text{\scriptsize dense}} \\
  \pi (\cL') &\hspace*{-7ex} \xleftarrow{\quad \ts 1:1 \, \quad }
  & \cL' & \xrightarrow{ \qquad \ \quad } & \hspace*{-7ex}
               \pi^{}_{\text{int}} (\cL') & \\
  \| & & & & \hspace*{-8ex} \| & \\ 
  L = (3\tau{+}2{-}\xi)\,\ZZ[\tau,\xi]
     & \multicolumn{3}{c}{\hspace*{-8ex}
      \xrightarrow{\qquad\quad\quad \ \ \star \ \quad\quad\qquad}}
  & \hspace*{-7ex}  {L_{}}^{\star\nts} = (-3\tau{+}4{+}\xi)\,\ZZ[\tau,\xi]
  &  \end{array}
\renewcommand{\arraystretch}{1}
\end{equation}
with the star map, in complex formulation, being given by
\[
  (a+b\tau+c\xi+d\tau\xi)^{\star} \, = \, a+b+c+d - (b+d)\tau
  -(c+d)\xi + d\tau \xi, \quad \mbox{for} \ a,b,c,d \in\QQ \ts .
\]
This is the Galois isomorphism of $\QQ(\tau,\xi)$ that fixes
$\QQ(\sqrt{-15}\ts)$ but no other subfield of $\QQ(\tau,\xi)$. In
other words, the star map is a composition of the non-trivial
algebraic conjugations in $\QQ(\tau)$ and $\QQ(\xi)$.
	
As in the guiding example, we construct the set-valued displacement
matrix $T^{}_{_\CAP}$, which is $24$-dimensional in this case. It has
the block structure
\begin{equation}\label{eq:TCAP}
  T^{}_{_\CAP} \, = \, 
  \begin{pmatrix}
    \vn & T^{}_{12} & \vn & \vn \\
    T^{}_{21} & T^{}_{22} & T^{}_{23} & T^{}_{24} \\
    \vn & T^{}_{32} & T^{}_{33} & T^{}_{34} \\
    \vn & T^{}_{42} & T^{}_{43} & T^{}_{44} \end{pmatrix},
\end{equation}
where each entry represents a $6{\times}6$ matrix and $\vn$ is
the $6{\times}6$ block of empty sets. The remaining matrices
$T^{}_{ij}$ are listed in Appendix~A.  As before,
$T^{}_{_\CAP}$ determines an IFS with \emph{linear} scaling
factor $\tau^{-1}$ on
$\bigl(\cK\RR^2_{\inte}\bigr)^{24}$, whose solution
provides the windows for a model set which (possibly after removing
points of density 0) is MLD with the CAP tiling, as discussed in
detail in~\cite{BGS}. The total window and its subdivisions are shown
in Figure~\ref{fig:CAPwindow}.
	
\begin{prop}[\cite{BGS}]
  The CAP tiling is MLD with a Euclidean model set derived from the
  CPS~\eqref{eq:CPSHat} and the hexagonal window with fractal
  boundaries shown in Figure~$\ref{fig:CAPwindow}$. \qed
\end{prop}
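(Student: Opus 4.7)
The plan is to follow the same strategy as in the one-dimensional guiding example, now promoted to two dimensions via the CPS in \eqref{eq:CPSHat}, but with the extra complication of $24$ prototile classes and a $\star$-map whose contraction factor on internal space is $\tau^{\star}=1-\tau$ in the $\QQ(\tau)$-direction (of modulus $<1$) while being an isometry on the $\QQ(\xi)$-direction. The first step is to set up a fixed point $\vL=\dot{\bigcup}_{i=1}^{24}\vL^{}_i$ of the stone inflation from Figure~\ref{fig:CAPsubst}, taking the control points of Figure~\ref{fig:CAPcp} as markers, so that, as established in \cite{BGS}, all control points lie in a single translation class of the return module $\cR^{}_{_{\CAP}}=(3\tau+2-\xi)\,\ZZ[\tau,\xi]$. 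The self-similarity then yields the system of set equations
\[
  \vL^{}_i \, = \, \bigcup_{j=1}^{24}\,\bigcup_{t\in T^{}_{ij}}
  \tau\vL^{}_j + t\ts ,
\]
analogous to \eqref{eq:EMS}, with $T^{}_{_{\CAP}}$ the block matrix \eqref{eq:TCAP}.

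The second step is to $\star$-map this system. Because $|\tau^{\star}|<1$ and $\xi^{\star}$ has modulus $1$, the resulting map on $\bigl(\cK\RR^2_{\inte}\bigr)^{24}$ still contracts by a factor $|\tau^{\star}|$ in each step, so by Banach's fixed-point theorem it has a unique solution $(W^{}_1,\dots,W^{}_{24})$ of non-empty compact sets. I would then verify by direct computation (or by quoting \cite{BGS}) that these sets are precisely the tiles drawn in Figure~\ref{fig:CAPwindow}, assemble them into the hexagonal window $W$, and check that $W$ is the closure of its interior and has a boundary of Lebesgue measure zero, so that $\oplam(W)$ is a regular, proper model set.

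The third step is to compare $\vL$ with $\oplam(W)$. By construction $\vL^{\star}_i \subseteq W^{}_i$ and hence $\vL \subseteq \oplam(W)$. One then computes $\mathrm{dens}(\vL)$ from the left Perron--Frobenius eigenvector of the substitution matrix (the abelianisation of $T^{}_{_{\CAP}}$) and $\mathrm{dens}\bigl(\oplam(W)\bigr)=\mathrm{dens}(\cL')\cdot\mathrm{vol}(W)$ by the uniform-distribution theorem for regular model sets~\cite{Moody,Schl98}; the two densities agree because the areas of the fractiles in Figure~\ref{fig:CAPsubst} are proportional to the corresponding entries of the PF eigenvector. Since the points of $L^{\star}$ lying on $\partial W$ form a set of density zero, one obtains $\vL=\oplam(W)$ after possibly discarding a null set.

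The MLD equivalence of the CAP tiling and $\vL$ is then a routine local-derivation statement: the tile type and orientation are determined by the local environment of a control point (forced by the border-forcing property of the inflation, which provides a finite coding radius), and conversely the control point marks the tile uniquely. The main obstacle in this program is \emph{not} any individual step in isolation but rather the explicit identification of the attractor of the $24$-dimensional IFS with the hexagonal window of Figure~\ref{fig:CAPwindow}; this requires a careful bookkeeping of the $6{\times}6$ blocks in \eqref{eq:TCAP}, which is why the authors properly defer this verification to \cite{BGS} and quote the result.
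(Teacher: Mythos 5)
Your proposal reconstructs exactly the argument that the paper itself only gestures at: the proposition is imported from \cite{BGS} without proof, and the surrounding text sets up precisely the machinery you describe --- the $24$-block displacement matrix \eqref{eq:TCAP}, the contractive star-mapped IFS whose unique attractor yields the windows of Figure~\ref{fig:CAPwindow}, and the Perron--Frobenius/density comparison giving $\vL=\oplam(W)$ up to a set of density zero --- with the hard explicit identification of the attractor deferred to \cite{BGS}, just as you defer it. Two small corrections: the CAP inflation factor is $\tau^{2}$, so the direct-space equations should read $\vL^{}_i=\bigcup_j\bigcup_{t\in T^{}_{ij}}\tau^{2}\vL^{}_j+t$ with the internal contraction given by the star image of $\tau^{2}$ (not of $\tau$); and in the final MLD step the local derivation from points back to tiles only works for the \emph{typed} point set $\vL^{}_1\,\dot{\cup}\,\cdots\,\dot{\cup}\,\vL^{}_{24}$ --- the paper explicitly notes that the uncoloured control points are \emph{not} MLD with the coloured ones, so the tile type cannot be recovered from the bare point configuration via border forcing and must instead be carried along as a colour.
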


\begin{figure}[ht]
  \includegraphics[width=0.6\textwidth]{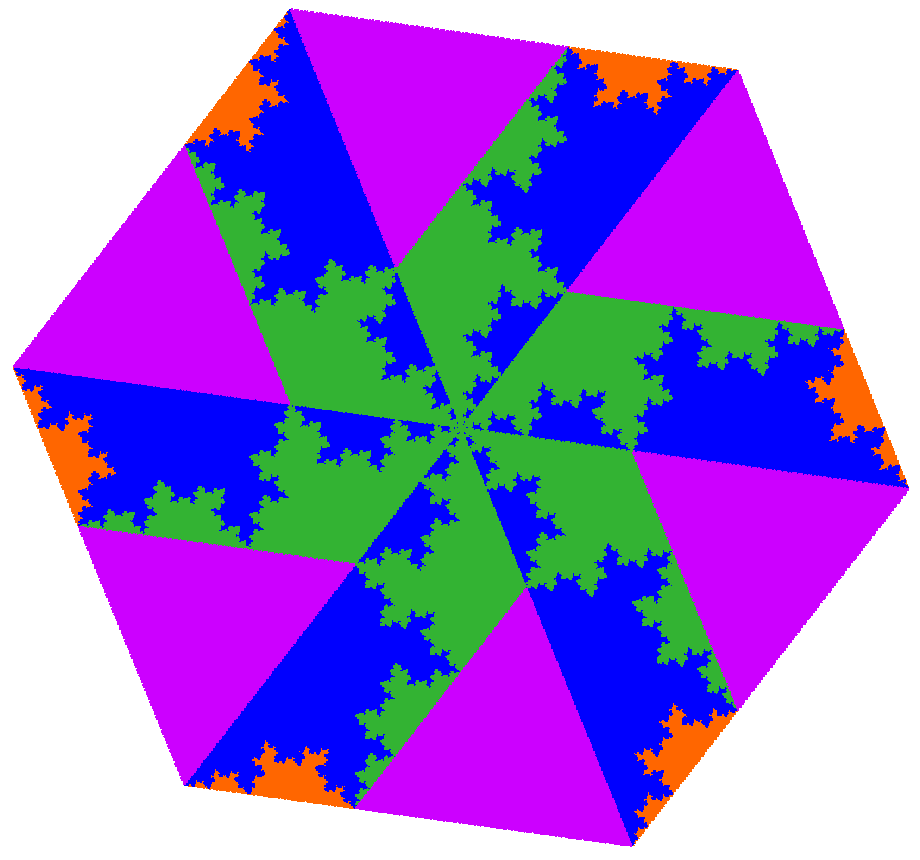}
  \caption{The window for the control points of the CAP tiling. The
    four different colours correspond to the four different shapes of
    prototiles. Note that there are two types of boundaries in the
    interior of the hexagon.}
  \label{fig:CAPwindow}
\end{figure}
	
The total window is a hexagon rotated by
$\arccos\bigl(\tfrac{3 +\sqrt{5}}{4} \sqrt{\frac{3}{3
    \sqrt{5}+7}}\bigr) \approx 52.24^{\circ}$ relative to the window
shown in~\cite{BGS}, which is due to our choice of a basis. For the
total window, it is possible to compute its Fourier transform
explicitly. Nevertheless, since some of the subwindows have fractal
boundary parts with Hausdorff \cite{MW} dimension
$\tfrac{\log(2+\sqrt{3})}{2\log(\tau)}\approx 1.3683764$, as shown in
\cite{BGS}, one has to use the cocycle approach to obtain the FB
coefficients for a general choice of weights.  Thus, for all
$k^{}_{\inte}\in\RR^2_{\inte} \simeq
\CC^{}_{\inte}$, we define the internal Fourier matrix
$\underline{B}(k^{}_{\inte})$ as
\begin{equation}\label{eq:CAPfourier}
  \underline{B}(k^{}_{\inte})^{}_{ij} \, = \, \sum_{t \in T^{}_{ij}}
  \ee^{2\pi\ii \langle t^{\star} | k^{}_{\inte} \rangle }
  \qquad  \mbox{with} \ T = T^{}_{_{\CAP}}, 
\end{equation}
where $\langle \, \cdot \, | \, \cdot \, \rangle$ denotes the standard
scalar product in $\RR^2_{\inte}$.
	
Since the tiles come in six orientations and the inflation rule
respects the orientation, the displacement matrix as well as the
(internal) Fourier matrix must reflect this fact. In particular, we
have the following symmetry properties of the matrices
$T^{}_{_{\CAP}}$ and $\underline{B}(k^{}_{\inte})$.
	
\begin{lemma}
  For the displacement matrix\/ $T^{}_{_\CAP}$ and the
  corresponding internal Fourier matrix \eqref{eq:CAPfourier}, one has
  the symmetry relations
\[
  S^{\top}  \underline{B}(k^{}_{\inte}) \, S \,=\,
  \underline{B}(\xi k^{}_{\inte}) \quad \mbox{and} \quad
  S\,T^{}_{_\CAP}\, S^{\top} \,= \, \xi\, T^{}_{_\CAP} \ts ,
\]
with the permutation matrix\/
$S = \mathmybb{1}^{}_4\, \otimes\, C \in \Mat\,(24,\ZZ)$, where\/
$\mathmybb{1}^{}_4$ is the \emph{4D} identity matrix, $\otimes$ is the
Kronecker product and $C$~stands for the companion matrix of\/ $X^6-1$,
\[
  C \,=\, \left(\begin{smallmatrix} 0&0&0&0&0&1\\1&0&0&0&0&0
      \\0&1&0&0&0&0\\ 0&0&1&0&0&0 \\ 0&0&0&1&0&0 \\ 0&0&0&0&1&0
    \end{smallmatrix}\right).
\] 
\end{lemma}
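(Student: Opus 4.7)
The plan is to derive the two identities in turn: first the displacement identity, which captures the geometric $C_6$-equivariance of the CAP inflation rule, and then the Fourier identity, which follows by a short computation together with the Galois-theoretic behaviour of the star map.

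First, I would establish the entrywise identity
\[
   T_{\sigma(i),\, \sigma(j)} \,=\, \xi \, T_{ij} \ts ,
\]
where $\sigma$ is the permutation of $\{1,\dots,24\}$ that fixes the shape index and increments the orientation index cyclically, so that $S = \mathmybb{1}^{}_4\otimes C$ is precisely its permutation matrix. This identity reflects the fact that the CAP inflation rule commutes with rotation by $\xi$ of the plane: rotating a supertile of type $j$ by $\xi$ produces the supertile of type $\sigma(j)$, and each interior tile of type $i$ at displacement $t$ is sent to one of type $\sigma(i)$ at displacement $\xi t$. This can be verified either by direct inspection of the block lists $T^{}_{ij}$ in Appendix~A, or more conceptually from the $C_6$-symmetric construction of the level-1 supertile in~\cite{BGS}. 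Translating the entrywise relation back into matrix form immediately yields $S\,T^{}_{_\CAP}\,S^{\top} = \xi\, T^{}_{_\CAP}$.

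For the Fourier identity, the usual permutation action of $S$ on matrix indices gives
\[
  (S^{\top}\underline{B}(k^{}_{\inte})\,S)^{}_{ij}
   \,=\, \underline{B}(k^{}_{\inte})^{}_{\sigma(i),\,\sigma(j)}
   \,=\, \sum_{t\in T_{\sigma(i),\sigma(j)}}
         \ee^{2\pi\ii \langle t^{\star} | k^{}_{\inte} \rangle},
\]
and the substitution $t=\xi s$ with $s\in T^{}_{ij}$ turns this into $\sum_{s\in T_{ij}} \ee^{2\pi\ii \langle (\xi s)^{\star} | k^{}_{\inte}\rangle}$. Two small algebraic observations then finish the proof. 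From the explicit form of the star map given after~\eqref{eq:CPSHat}, one has $\xi^{\star} = 1-\xi = \overline{\xi}$, so that $(\xi s)^{\star} = \overline{\xi}\, s^{\star}$; and for the standard inner product on $\CC\simeq \RR^{2}_{\inte}$ one has $\langle \overline{\xi} u \,|\, v\rangle = \mathrm{Re}(\overline{\xi} u \overline{v}) = \mathrm{Re}(u\,\overline{\xi v}) = \langle u \,|\, \xi v\rangle$. Applying these with $u=s^{\star}$ and $v=k^{}_{\inte}$ shows that $\langle (\xi s)^{\star}|k^{}_{\inte}\rangle = \langle s^{\star}|\xi k^{}_{\inte}\rangle$, so the sum becomes $\underline{B}(\xi k^{}_{\inte})^{}_{ij}$, which is the claim.

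The whole argument is essentially formal once the displacement equivariance has been established, so I expect Step~1 to be the main obstacle. It is not deep, but it is the only point where the specific combinatorial structure of the CAP supertile (as opposed to generic CPS formalism) actually enters. Depending on which direction of orientation shift is chosen for $C$ relative to the geometric sense of rotation by $\xi$, one may have to interchange $\xi \leftrightarrow \overline{\xi}$ in intermediate steps; the final statement is insensitive to this choice since the roles of the two Galois conjugates are only fixed once the labelling convention is.
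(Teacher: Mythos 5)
Your argument is correct and is precisely the paper's (one-line) proof written out in full: the paper likewise reduces everything to ``the structure of the matrices'' plus the identity $\langle (\xi t)^{\star}\,|\,k^{}_{\inte}\rangle=\langle \xi^{-1}t^{\star}\,|\,k^{}_{\inte}\rangle=\langle t^{\star}\,|\,\xi k^{}_{\inte}\rangle$, which is exactly your pair of observations $\xi^{\star}=1-\xi=\overline{\xi}$ and $\langle\overline{\xi}u\,|\,v\rangle=\langle u\,|\,\xi v\rangle$. The convention issue you flag at the end is real but harmless: with $Ce^{}_j=e^{}_{j+1}$, the entrywise relation $T^{}_{\sigma(i),\sigma(j)}=\xi\ts T^{}_{ij}$ (checkable on Appendix~A, e.g.\ on the diagonal of $T^{}_{12}$) translates into $S^{\top}\ts T^{}_{_\CAP}\ts S=\xi\ts T^{}_{_\CAP}$, i.e.\ the same conjugation direction as in the Fourier identity, and the form $S\ts T^{}_{_\CAP}\ts S^{\top}$ then carries $\overline{\xi}$ instead of $\xi$, exactly the interchange you anticipated.
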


\begin{proof}
  The claims follow from the structure of the matrices and from an
  explicit computation using
  $\langle t^{\star}\,|\,\xi k^{}_{\inte}\rangle = \langle
  \xi^{-1} t^{\star} \,|\, k^{}_{\inte}\rangle = \langle (\xi
  t)^{\star} \,|\, k^{}_{\inte} \rangle$.
\end{proof}
	
This symmetry relation provides a good consistency check for numerical
calculations, which can be implemented easily. It detects the position
of eventual mistake, in particular in $\underline{B}(k^{}_{\inte})$.
	
Now, we have all the ingredients needed to discuss the spectral
properties of the CAP tiling. In~\cite[Lemma 10]{BGS}, it was proved
that the CAP tiling is pure-point diffractive. The authors also
derived the Fourier module
\[
  L^{\circledast}_{_{\CAP}} \, = \,
  \myfrac{(1+\xi)(\tau-\xi)\ts\ii}{3\sqrt{15}}\, \ZZ[\tau,\xi],
\]
which agrees with the dynamical spectrum of the CAP tiling dynamical
system. For the diffraction intensities, it is sufficient to compute
the Fourier transform of the windows using the cocycle. The result is
shown in Figure \ref{fig:diffCAP}. To obtain a first (and approximate)
impression, one can replace the hexagonal total window with a circular
one (of the same area) and use the explicit Fourier transform of a
circle in terms of Bessel functions $J_{\nu}$;
see~\cite[Rem.~9.15]{TAO}. Note that the uncoloured CAP point set is
\emph{not} MLD with the coloured one, so it is insufficient to only
work with the total window, as we demonstrate in Figures
\ref{fig:CAPdiff} and \ref{fig:CAPdiffZeroCP}.

\begin{figure}[ht]
\begin{subfigure}{.5\textwidth}
	\centering
	\includegraphics[width=0.9\textwidth]{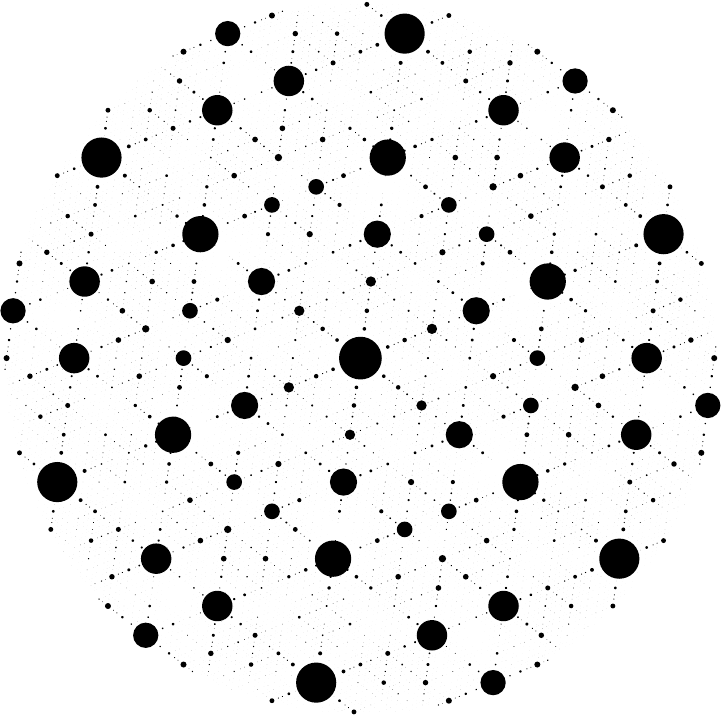}
	\caption{}
	\label{fig:CAPdiff}
\end{subfigure}%
\begin{subfigure}{.5\textwidth}
	\centering
	\includegraphics[width=0.9\linewidth]{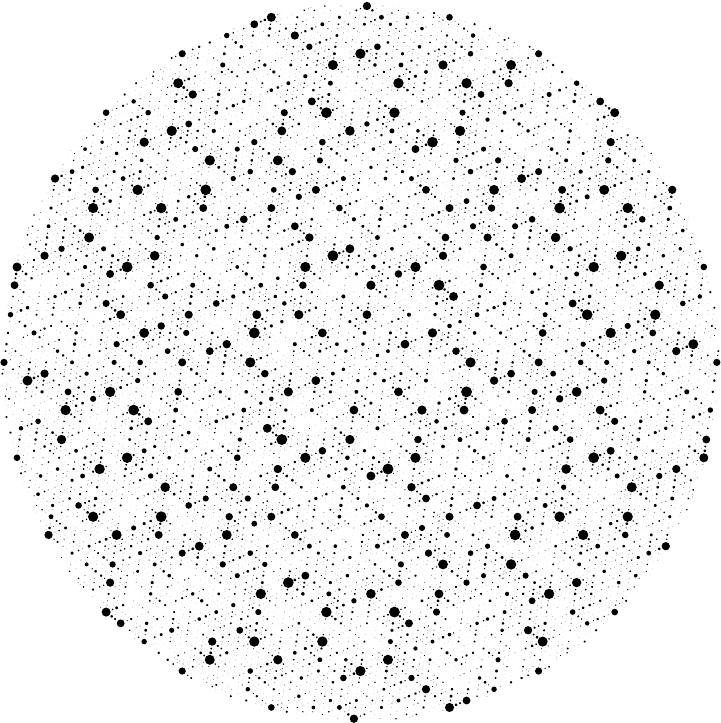}
	\caption{}
	\label{fig:CAPdiffZeroCP}
\end{subfigure}
\caption[LoF entry]{Diffraction pattern of the CAP tiling in the
  centred ball of radius 0.6, with two different sets of weights. The
  radii of the black disks are proportional to their
  intensities. Panel A shows the case with equal weights, where the
  intensity of the central peak equals
  $\dens(\vL_{_\CAP})^2 = \tfrac{1}{75 \tau^4}\approx 0.001945$. The
  diffraction exhibits the sixfold symmetry and mirror symmetries as
  well. The second brightest peaks are located at
  $\tfrac{1}{30}\bigl(\sqrt{5} + \ii \sqrt{3}\,(5+2\sqrt{5}\,) \bigr)$
  and all its $\xi$-multiples.
			
  Figure (B) shows the diffraction pattern for weights
  $(0,\, 0,\, \tau,\, -1)$, chosen so that the central peak
  vanishes. The Bragg peaks are not as bright as in Figure (A), so in
  order to obtain a more visible pattern, we magnified all intensities
  by a factor of 4. One can still see the sixfold symmetry, but the
  mirror symmetry is broken, which demonstrates the chiral nature of
  the CAP tiling (as manifest in the window in
  Figure~\ref{fig:CAPwindow}). To create the figure, 15 iterations of
  the cocycle were used.} \label{fig:diffCAP}
\end{figure}

\subsection{Shape changes --- from CAPs to Hats}

Let us now explain the reprojection of the CAP tiling that results in
the Hat tiling. To be more precise, we start with the (coloured) set
of control points, which is MLD to the CAP tiling, and we modify it to
a different coloured point set, which is MLD to the Hat tiling. This
is then a deformed model set in the sense of~\cite{BL05,BerDun00},
which can now be used, as outlined in~\cite{BGS}. Moreover, the
authors also derived the return module for the Hat tiling, which reads
\begin{equation}\label{eq:retModHat}
  \myfrac{\sqrt{5}}{4} (1+\xi)(\tau-\xi)^3 \,\ZZ[\xi] \ts .
\end{equation}
It is a scaled and rotated triangular lattice, thus is of rank $2$ ---
in comparison to $\cR^{}_{_{\CAP}}$, which is of rank $4$. The
generators for \eqref{eq:retModHat} can be chosen as
\begin{equation}\label{eq:generatorsHat}
\begin{split}
  \bm{v}^{}_1 & \,=\, \myfrac{\sqrt{5}}{4}\xi (1+\xi)(\tau-\xi)^3, \\
  \bm{v}^{}_2 & \,=\, \myfrac{\sqrt{5}}{4}\xi^2 (1+\xi)(\tau-\xi)^3. \\
\end{split}
\end{equation}

Let us derive the reprojection, and the deformation mapping from the
CAP to the Hat tiling. First, we identify the generators of the return
module of the CAP tiling (and due to our choice of the control points,
we do not need to pay attention to the type of the points!), for
example as indicated in Figure~\ref{fig:CAPgen}. Then, we find the
corresponding patch in the Hat tiling and identify the same return
vectors as shown in Figure~\ref{fig:Hatgen}. The reprojection is
chosen so that all control points lie on the anti-Hats (meaning the
reflected Hats), and their colouring determines the
neighbourhood. Thus, the coloured point set is MLD with the Hat
tiling.

\begin{figure}
\centering
\begin{subfigure}{.5\textwidth}
	\centering
	\include{CAPgen}
           \vspace*{-10mm}
	\caption{}
	\label{fig:CAPgen}
\end{subfigure}%
\begin{subfigure}{.5\textwidth}
	\centering
	\includegraphics[width=.8\linewidth]{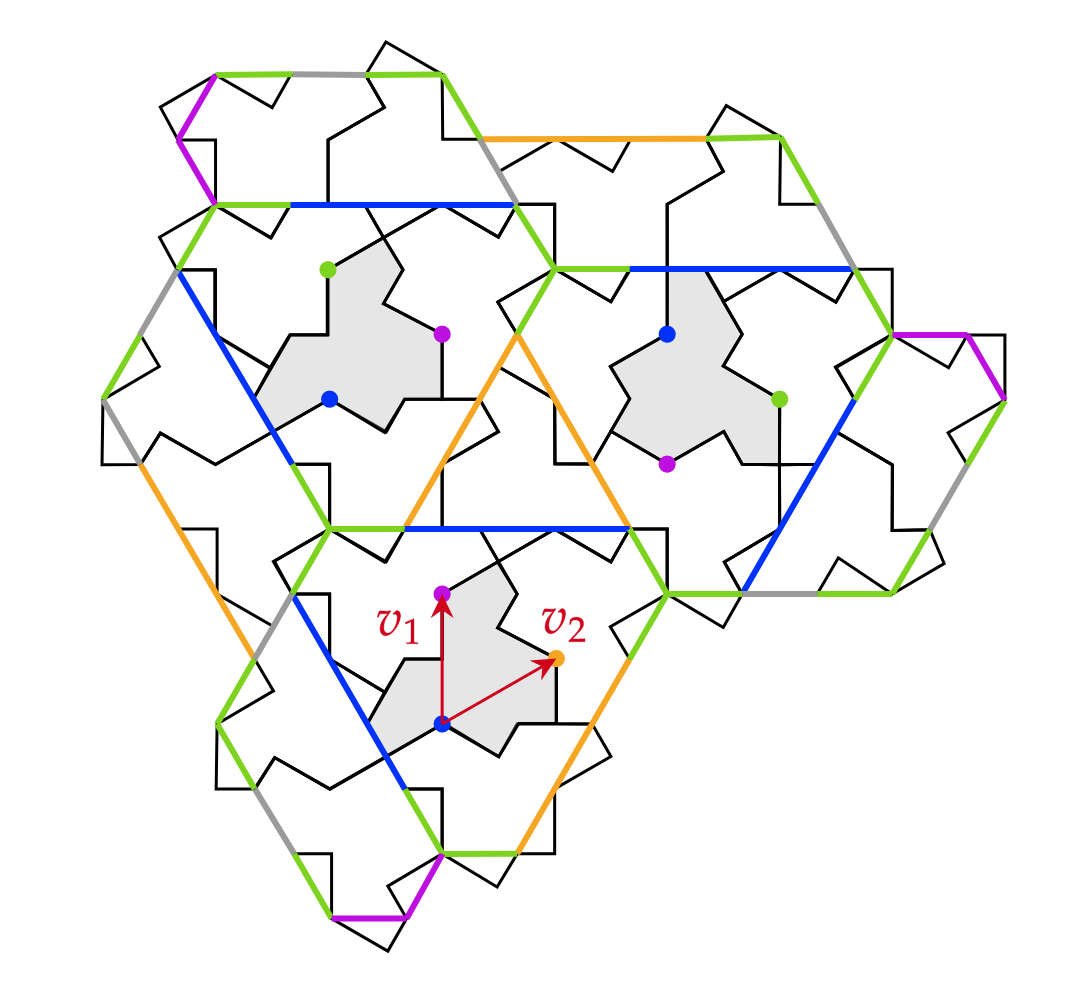}
	\caption{}
	\label{fig:Hatgen}
\end{subfigure}
\caption{Patch of the CAP tiling with its control points (A). The 4
  generators of the return module are indicated. Panel (B) shows the
  deformed version of the same patch together with the control points
  and the underlying Hat tiling. Note that one has to deform the tiles
  first, then identify the new return module, which is a triangular
  lattice in this case. Then, one can \emph{decide} how to choose the
  control points accordingly. In our case, we decided to choose the
  control point so that they all lie on the anti-Hats as shown in
  Figure (B). We also include the generators of the return module of
  the Hat tiling and indicate the generators by arrows.}
	\label{fig:generators}
\end{figure}

Now, the reprojection map acts on the level of the generators of the
return modules as
\begin{equation*}\label{eq:reprojModules}
  \begin{split}
    \bm{u}^{}_1 & \longmapsto \ 3\bm{v}^{}_{1} - \bm{v}^{}_{2}, \\
    \bm{u}^{}_2 & \longmapsto \ \bm{v}^{}_{1}, \\
    \bm{u}^{}_3 & \longmapsto \ \bm{v}_{1}+ 2\bm{v}_{2} , \\
    \bm{u}^{}_4 & \longmapsto \ \bm{v}_{2}. \\
\end{split}
\end{equation*} 
This determines the entire reprojection. As in our guiding example, we
can thus employ the matrix description via the reprojection matrix
$\pi' \in \RR^{2\times 4}\simeq \CC^{1\times 2}$, acting on the
lattice generators as
\[
   \pi'\,V^{}_{\CC} \, = \, \bigl(\begin{matrix}
     3\bm{v}^{}_{1} - \bm{v}^{}_{2} & \bm{v}^{}_{1} &
     \bm{v}_{1} + 2\bm{v}_{2}  & \bm{v}_{2} \end{matrix}\bigr),
\]
or the real version
\begin{equation}\label{eq:reprojectionCAP}
  \pi'\,V^{}_{\RR} \, = \, \myfrac{1}{16} \begin{pmatrix}
    15+45\sqrt{5} & 18\sqrt{5} & 45+9\sqrt{5} & 15+9\sqrt{5}\\[1mm]
    -25\sqrt{3}+9\sqrt{15} & -10\sqrt{3} & -5\sqrt{3}+27\sqrt{15} &
    -5\sqrt{3}+9\sqrt{15} \end{pmatrix}.
\end{equation}
Since $V^{}_{\RR}$ is an invertible matrix, one can multiply
\eqref{eq:reprojectionCAP} from the right by $V^{-1}_{\RR}$ to obtain
\[
  \pi' \, = \, \begin{pmatrix}
 	1 & 0 & -\frac{11}{16} & \frac{3\sqrt{15}}{16} \\[2mm]
 	0 & 1 & \frac{3\sqrt{15}}{16} & \frac{11}{16}  \end{pmatrix}.
\]
Since the reprojection can be considered as a special case of a
deformation of a model set, we have
$\bm{v}^{}_{i} \, =\, \bm{u}^{}_{i} + D\bm{u}^{\star}_{i}$, for
$i\in \{1,2,3,4\}$, where $D$ is the desired deformation mapping form
$\RR^2_{\inte}$ to $\RR^2$. Again, this can be rewritten compactly
using the matrices and for the real version, then giving
$\pi' \,=\,\pi + D \pi^{}_{\inte}$.  The projections with respect to
the standard basis read
$ \pi = \bigl( \begin{smallmatrix} 1 & 0 & 0 & 0 \\0 & 1& 0 & 0
\end{smallmatrix} \bigr)$ and
$ \pi^{}_{\inte} = \bigl(\begin{smallmatrix} 0 & 0 & 1 & 0 \\0 & 0 & 0
  & 1
\end{smallmatrix}\bigr)$, so 
\[
  D \,= \, \myfrac{1}{16} \begin{pmatrix}
    -11 & 3\sqrt{15} \\[1mm]
    3\sqrt{15} & 11 \end{pmatrix}.
\]
We summarise the above derivation as follows.
\begin{theorem}
  The set of control points of the Hat tiling is a deformed model set
  obtained from the control points of the CAP tiling using
\[
  D \,= \, \myfrac{1}{16} \begin{pmatrix}
    -11 & 3\sqrt{15} \\[1mm]
    3\sqrt{15} & 11  \end{pmatrix}
\]
as the linear deformation mapping. Moreover, the Hat control points
are a reprojection of the CAP tiling control points using the
projection\/ $\pi' \,=\,\pi + D \pi^{}_{\inte}$. \qed
\end{theorem}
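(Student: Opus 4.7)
The plan is to solve the matrix equation $\pi'V^{}_{\RR} = \bigl(3\bm{v}^{}_1 - \bm{v}^{}_2 \ \ \bm{v}^{}_1 \ \ \bm{v}^{}_1 + 2\bm{v}^{}_2 \ \ \bm{v}^{}_2\bigr)$ by inverting $V^{}_{\RR}$, and then to read off the deformation $D$ from the resulting block structure. In outline, I would proceed in four steps: first, fix the correspondence of generators from Figure~\ref{fig:generators}; second, promote it to the displayed matrix identity; third, use the invertibility of $V^{}_{\RR}$ in \eqref{eq:latticeR} to solve for $\pi'$ in closed form; fourth, identify $D$ via the splitting $\pi' = \pi + D\pi^{}_{\inte}$.

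For Step~1, one reads off from Figure~\ref{fig:generators} that the four generators $\bm{u}^{}_1,\ldots,\bm{u}^{}_4$ of the CAP return module are sent, under the deformation realising the Hat tiling, to the indicated integer combinations of the Hat generators $\bm{v}^{}_1, \bm{v}^{}_2$ from \eqref{eq:generatorsHat}. The $C^{}_6$-symmetry relations $\bm{u}^{}_3 = \xi\bm{u}^{}_1$ and $\bm{u}^{}_4 = \xi\bm{u}^{}_2$ provide a useful consistency check, since they must survive the deformation, which commutes with the rotation action by construction.

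Steps~2 and~3 are essentially routine: extending linearly produces the displayed $2\times 4$ matrix identity, and since $V^{}_{\RR}$ is invertible (its columns form an $\RR$-basis of $\RR^4$), right-multiplication by $V^{-1}_{\RR}$ yields $\pi'$ in closed form. For Step~4, the ansatz $\pi' = \pi + D\pi^{}_{\inte}$ is guaranteed by the deformed model set framework of \cite{BL05,BerDun00} together with the Clark--Sadun classification \cite{CS1,CS2,KelSad}, which ensures that any Hat representative within the topological conjugacy class of the CAP tiling arises from a linear reprojection of this special form. Hence the first two columns of $\pi'$ must equal $\mathmybb{1}^{}_2$, which provides an independent numerical check that Step~1 was read off correctly, and $D$ is then the $2\times 2$ block formed by the remaining two columns.

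The principal obstacle is Step~1: the identification from Figure~\ref{fig:generators} must be justified as more than visual guesswork. One approach is to fix a base point in the CAP patch, apply the deformation (whose existence follows from \cite{BGS,KelSad}), and then match each of the four $\bm{u}^{}_i$-translates with specific control points in the target Hat patch. Alternatively, one can appeal to the $C^{}_6$-equivariance and to the uniqueness (up to global rotation and scale) of any reprojection compatible with the required inclusion of return modules, which narrows the possibilities to finitely many candidates that can then be tested against a single return vector. Once Step~1 is secure, the remaining derivation is a self-contained linear-algebra computation producing the claimed form of $D$.
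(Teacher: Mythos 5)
Your proposal follows essentially the same route as the paper: read off the images $\bm{u}^{}_1\mapsto 3\bm{v}^{}_1-\bm{v}^{}_2$, $\bm{u}^{}_2\mapsto\bm{v}^{}_1$, $\bm{u}^{}_3\mapsto\bm{v}^{}_1+2\bm{v}^{}_2$, $\bm{u}^{}_4\mapsto\bm{v}^{}_2$ from the matched patches in Figure~\ref{fig:generators}, assemble the identity $\pi'V^{}_{\RR}=(\cdots)$, right-multiply by $V^{-1}_{\RR}$, and extract $D$ from the splitting $\pi'=\pi+D\pi^{}_{\inte}$, exactly as in \eqref{eq:reprojectionCAP} and the surrounding derivation. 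Your added remarks on justifying the generator identification and using $C^{}_6$-equivariance as a consistency check go slightly beyond the paper's informal treatment but do not change the argument.
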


The matrix $D$ allows the computation of the diffraction of the Hat
tiling from the amplitude functions of the CAP tiling. It plays a role
similar to the scaling factor $\lambda^{-2}$ in our guiding example,
now with $D: \RR^{2}_{\inte} \to \RR^2$.

Let us now move to the spectral properties of the Hat tiling. Due to
the topological conjugacy, its Fourier module is the same as that of
the CAP tiling, namely $L^{\circledast}_{_{\CAP}}$. It (of course)
contains the dual of the return module of the Hat tiling, which is
\begin{equation}\label{eq:dualModuleHat}
  \myfrac{(1+\xi)(\tau-\xi)^3 \, \ii}{3\sqrt{15}}\, \ZZ[\xi] \ts . 	
\end{equation}
$L^{\circledast}_{_{\CAP}}$ gives the dynamical spectrum of the Hat
tiling.

Further, one has an additional similarity to the one-dimensional
guiding example. As already suggested by the return module
\eqref{eq:retModHat}, the set of control points of the Hat tiling
forms a~subset of the triangular lattice. It follows from
\cite[Thm.~10.3]{TAO} that the corresponding diffraction measure is
lattice-periodic. The lattice of periods is the dual of the underlying
lattice. In our case, it is given by \eqref{eq:dualModuleHat}.

Since we already know the dynamical spectrum of the Hat tiling, we can
proceed to compute the FB coefficients. Suppose that tiles of type $i$
come with weight $\alpha^{}_{i}\in\CC$. Then, the FB coefficients
vanish for $k\notin L^{\circledast}_{_{\CAP}}$, while the remaining
ones are given via the inverse Fourier transform of the windows as
\begin{equation}\label{eq:FBdefHat}
  A^{}_{_{\mathrm{Hat}}}(k) \, = \,  H_{_{\CAP}}(k^{\star} - D^{\top} k),
\end{equation}
with
\[
  H_{_{\CAP}}(k^{}_{\inte}) \, = \,
  \frac{\dens\bigl(\vL^{}_{_{\CAP}}\bigr)}{\vol\bigl(
    W^{}_{_{\CAP}} \bigr)}\sum_{i}\alpha^{}_{i} \ts
  \widecheck{\bm{1}^{}_{W_{_{\CAP,i}}}}\bigl(k^{}_{\inte}\bigr),
\]
where $W_{_{\CAP,i}}$ stands for the part of the window from
Fig.~\ref{fig:CAPwindow} corresponding to points of type~$i$.

Note that the set of arguments
$\{k^{\star} - D^{\top} k \, : \, k\in L^{\circledast} \}$ forms a
lattice in $\RR^2_{\inte}$ --- in contrast to
$L^{\circledast}_{_{\CAP}}$ itself, which is a dense subset of
$\RR^2$. The lattice is $\tfrac{1}{12}(\tau-2+3\xi)\ts\ZZ[\xi]$ and
provides additional insight into the periodic nature of the
diffraction measure.  Figure~\ref{fig:HatDiff_total} shows the
diffraction spectra of the Hat tiling with two different sets of
weights, together with a~fundamental domain and generators of the
lattice of periods.

\begin{figure}
\begin{subfigure}{.5\textwidth}
	\centering
	\includegraphics[width=0.9\linewidth]{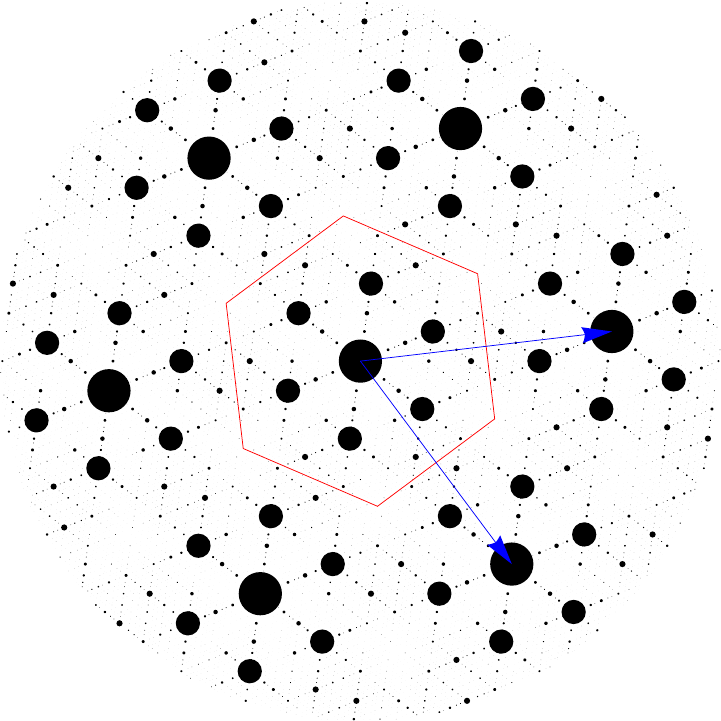}
	\caption{}
	\label{fig:HatDiff}
\end{subfigure}%
\begin{subfigure}{.5\textwidth}
	\centering
	\includegraphics[width=0.9\linewidth]{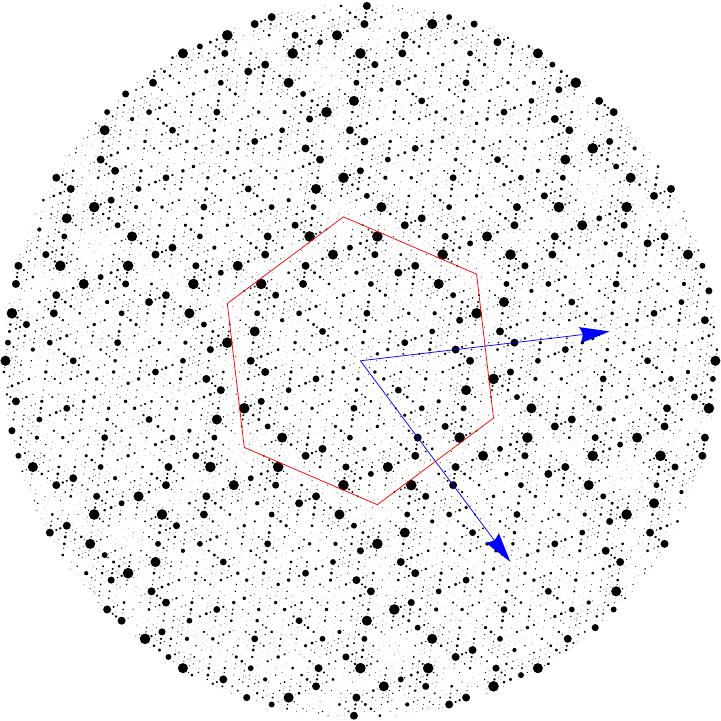}
	\caption{}
	\label{fig:HatDiffZeroCP}
\end{subfigure}
\caption{Diffraction of the Hat tiling with equal weights (A), and
  with weights $(0,\, 0,\, \tau, -1)$ chosen such that the central
  peak vanishes (B). Both pictures show a~fundamental domain (hexagon)
  of the support of the diffraction measure, which is lattice-periodic
  with periods \eqref{eq:dualModuleHat}. Two fundamental periods are
  indicated by the blue arrows. The intensities of the Bragg peaks are
  proportional to the area of the disks, and the intensity of the
  brightest one in (A) is $\tfrac{1}{75 \tau^4}\approx 0.001945$. In
  both cases, one has sixfold symmetry, while the absence of any
  mirror symmetry can be seen in the fundamental domain. $15$
  iterations of the cocycle were used in the computation. }
	\label{fig:HatDiff_total}
\end{figure}

The formula for the FB coefficients \eqref{eq:FBdefHat} holds for the
entire class of deformations, among them all affine ones. The proof
mimics the one given for the one-dimensional silver mean case in
\eqref{eq:FBdeformed}. We state it as a theorem for linear maps (the
translation part of affine mappings adds an additional phase factor,
which does not play a role for the intensities); it can also be found
implicitly in a slightly different form in~\cite[Thm.~2.6]{BerDun00}.

\begin{theorem}
  Let\/ $\vL\subset \RR^n$ be a model set arising from a Euclidean
  CPS\/ $(\RR^n, \, \RR^n_{\inte}, \, \cL)$ with window\/ $W$, and
  let\/ $\vartheta: \RR^n_{\inte} \longrightarrow \RR^n$ be a linear
  mapping, represented by the matrix\/ $D$ with respect to the
  standard bases in\/ $\RR^n_{\inte}$ and\/ $\RR^n$. Then, for\/
  $k\in L^{\circledast}$, the FB coefficients\/
  $A^{}_{\vL^{}_{\vartheta}}(k)$ of the deformed model set\/
  $\vL^{}_{\vartheta} = \{ x+ \vartheta (x^{\star}) \, : \, x\in \vL
  \}$ are given by
\[
    A^{}_{\vL^{}_{\vartheta}}(k) \, = \, H(k^{\star} - D^{\top}k),
\]
with \pushQED{\qed}
\[
  H(k^{}_{\inte}) \, = \, \frac{\dens(\vL)}{\vol( W)}\,\ts
  \widecheck{\bm{1}^{}_{W}}\bigl(k^{}_{\inte}\bigr). \qedhere
\]
\popQED
\end{theorem}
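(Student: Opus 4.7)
My plan is to mimic the one-dimensional derivation in \eqref{eq:FBdeformed} verbatim, upgrading it from scalar multiplication to linear maps between $\RR^n$ and $\RR^n_{\inte}$ and from $\tfrac{1}{2r}$-averages to volume averages along a van Hove sequence.

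The starting point is the definition
\[
  A^{}_{\vL^{}_{\vartheta}}(k) \, = \, \lim_{r\to\infty}
  \myfrac{1}{\vol(B_r)} \sum_{\substack{x\in \vL \\ |x|\leqslant r}}
   \ee^{-2\pi\ii \langle k,\, x+\vartheta(x^{\star})\rangle} \ts ,
\]
where the existence of the limit is guaranteed by the pure-point diffractivity of the regular model set $\vL^{}_{\vartheta}$ via \cite{BL05,BerDun00}. The first step is a purely algebraic rewrite: by the adjointness relation $\langle k,\vartheta(x^{\star})\rangle = \langle D^{\top}k, x^{\star}\rangle$, the deformation contributes an extra factor $\ee^{-2\pi\ii \langle D^{\top}k,\, x^{\star}\rangle}$ in the summand.

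The key arithmetic input comes next. For $k\in L^{\circledast} = \pi(\cL^{\ast})$, the pair $(k,-k^{\star})$ lies in the dual lattice $\cL^{\ast}$, so the character $(x,x^{\star})\mapsto \exp(-2\pi\ii\langle k,x\rangle + 2\pi\ii\langle k^{\star},x^{\star}\rangle)$ is trivial on $\cL$. Consequently,
\[
  \ee^{-2\pi\ii\langle k,\, x\rangle}
  \, = \, \ee^{2\pi\ii\langle k^{\star},\, x^{\star}\rangle}
\]
holds for every $x\in L\supseteq \vL$. Inserting this identity into the sum collapses the two exponentials into a single one depending only on $x^{\star}$, namely $\ee^{2\pi\ii\langle k^{\star}-D^{\top}k,\,x^{\star}\rangle}$, exactly as in the scalar case.

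The final step is equidistribution. Since $\vL$ is a regular model set, Weyl's theorem in the internal window \cite[Prop.~2.1]{Schl98} (see also \cite[Ch.~7]{TAO}) implies that, for any continuous $g$ on $W$,
\[
  \lim_{r\to\infty}\myfrac{1}{\vol(B_r)}
  \sum_{\substack{x\in \vL \\ |x|\leqslant r}} g(x^{\star})
  \, = \, \myfrac{\dens(\vL)}{\vol(W)}\int_{W} g(y)\dd y \ts .
\]
Applying this to the continuous function $g(y)=\ee^{2\pi\ii\langle k^{\star}-D^{\top}k,\,y\rangle}$ yields the integral $\tfrac{\dens(\vL)}{\vol(W)}\ts \widecheck{\bm{1}^{}_{W}}(k^{\star}-D^{\top}k) = H(k^{\star}-D^{\top}k)$, completing the argument.

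The only conceptual subtlety is the identity $\ee^{-2\pi\ii\langle k,x\rangle}=\ee^{2\pi\ii\langle k^{\star},x^{\star}\rangle}$ for $k\in L^{\circledast}$ and $x\in L$; once this is accepted as the multidimensional analogue of the algebraic-integer identity used in the guiding example, the rest is a direct translation of \eqref{eq:FBdeformed} with the minor bookkeeping upgrade that $D$ is now a matrix and its transpose appears naturally via the inner product.
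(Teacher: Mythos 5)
Your argument is essentially the paper's own proof: the paper states explicitly that the theorem is proved by mimicking the one-dimensional computation in \eqref{eq:FBdeformed}, and your three steps (adjointness $\langle k,\vartheta(x^{\star})\rangle=\langle D^{\top}k,x^{\star}\rangle$, the character identity on $\cL$, Weyl equidistribution in the window) are exactly that computation upgraded to matrices. One sign slip should be fixed: with the paper's conventions the lift of $k\in L^{\circledast}$ to the dual lattice is $(k,k^{\star})$, not $(k,-k^{\star})$, so that $\langle k,x\rangle+\langle k^{\star},x^{\star}\rangle\in\ZZ$ for all $(x,x^{\star})\in\cL$; this is what yields the identity $\ee^{-2\pi\ii\langle k,x\rangle}=\ee^{2\pi\ii\langle k^{\star},x^{\star}\rangle}$ that you actually use (and which is the correct multidimensional analogue of the trace argument behind ``$kx$ is an algebraic integer'' in the guiding example), whereas the membership $(k,-k^{\star})\in\cL^{\ast}$ as you state it would produce the opposite sign and hence $H(-k^{\star}-D^{\top}k)$.
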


\section{Spectre}\label{sec:spec}

The Spectre tiling was constructed shortly after the discovery of the
Hat tiling by the same team of authors~\cite{Spectre}. The Spectre,
which looks a little like a malicious cat, is an aperiodic monotile
with respect to translations and rotations. Notably, in contrast to
the Hat tiling, a~reflected copy of the prototile is not required.
The Hat and Spectre tilings are closely related, as the latter was
constructed using two tiles from the Hat tiling family. Nevertheless,
the combinatorics of the Spectre is rather different from that of the
Hat.

Despite the fact that one needs only translations and rotations of a
single Spectre tile, the Spectre tiling forms two LI-classes. This
manifests itself in distinct (and rationally independent) frequencies
for Spectres rotated by $\tfrac{\pi}{6}$ relative to each
other. Although Spectres occur in 12 orientations, the LI-classes have
six-fold symmetry only. It is thus tempting to speak of Spectres and
Shadow-Spectres, whose frequency ratio is $(4+\sqrt{15})^2$.

Smith et al.~\cite{Spectre} provided a combinatorial inflation for
marked hexagons, which gives rise to the Spectre tiling. This
inflation acts on nine different hexagons
($\Gamma,\, \Delta,\,\Theta,\,\Lambda,\,\Xi,\,\Pi,\,
\Sigma,\,\Phi,\,\Psi$), each appearing in six different orientations,
which gives 54 translational prototiles in total. One can assign
control points to five of them ($\Theta,\,\Xi,\,\Sigma,\,\Phi,\,\Psi$)
such that the resulting point set is MLD to the Spectre
tiling. Moreover, based on the combinatorial inflation, a self-similar
version of the Spectre tiling was derived \cite{BGS2}, called
CASPr. It is topologically conjugate (but not MLD) to the Spectre, and
it possesses a model set description, which we employ in what
follows. The cut-and-project description of the CASPr tiling is more
complex than that of the CAP tiling, which plays the analogous role
for the Hat tiling~\cite{BGS}. Although the leading eigenvalue of the
inflation matrix is $\lambda = 4+\sqrt{15}$, a PV unit, the
corresponding linear scaling is $\sqrt{4+\sqrt{15}}$. Due to the
chiral nature of the tiling, a reflection is required when the
substitution rule is applied once; see~\cite{BGS2} for further
details.  The underlying number field is the quartic number field
$\QQ(\xi,\lambda) = \QQ(\alpha)$ with
$\alpha = \sqrt{5}\ee^{\frac{2\pi \ii}{12}}$, which satisfies
$\alpha^4 = 5\alpha^2-25$. In contrast to the Hat tiling,
$\QQ(\alpha)$ has class number 2, see entry 4.0.3600.3 of
\cite{LMFDB}, which makes the description of the return module in
terms of ideals more difficult.

The generators of the return module
$\cR^{}_{_{\CASPr}}$ can be chosen as follows
\begin{equation}\label{eq:CASPr_generators}
\begin{split}
  g^{}_{1} \ & = \ -1-\xi+\lambda -2\xi\lambda,\\
  g^{}_{2} \ & = \ 1-2\xi+2\lambda -\xi\lambda \ = \ \xi\ts g^{}_{1} ,\\
  g^{}_{3} \ & = \ -2+\xi+2\lambda +2\xi\lambda,\\
  g^{}_{4} \ & = \ -2-2\xi-\lambda +2\xi\lambda,
\end{split}
\end{equation}
so $\cR^{}_{_{\CASPr}} \subset \ZZ[\lambda,\xi]$, the latter being of
index $3$ in the ring of integers
$\cO^{}_{\QQ(\alpha)} = \langle 1,\, \alpha, \, \tfrac{\alpha^2}{5},
\, \tfrac{\alpha^3}{5} \rangle$. Alternatively, $g^{}_{4}$ could be
replaced by $\xi\ts g^{}_{3}$, making the basis more symmetric, but
this would also require changes to Figure~\ref{fig:reprogen}.  Note
that, although $\ZZ[\lambda,\xi]$ is \emph{not} an ideal in
$\cO^{}_{\QQ(\alpha)}$, the return module $\cR^{}_{_{\CASPr}}$
\emph{is} an ideal in $\ZZ[\lambda,\xi]$ as well as in
$\cO^{}_{\QQ(\alpha)}$. Surprisingly, it possesses the same set of
generators in both cases, so we can write (without confusion)
\begin{equation}\label{eq:CASPR_ideals}
  \cR^{}_{_{\CASPr}} \, = \,
  \bigl ( g^{}_{1},\, g^{}_{3} \bigr) \, = \, \bigl ( g^{}_{1} \bigr)
  + \bigl ( g^{}_{3} \bigr).  
\end{equation}

The representation of
$\cR^{}_{_{\CASPr}} \subset \ZZ[\lambda,\xi]$ in
$\RR^2$ can be chosen as
\begin{equation}\label{eq:genSpec}
\begin{split}
  1\ &\longmapsto \ \bm{w}^{}_{1}\,=\,\biggl(\begin{matrix} 1 \\ 0
  \end{matrix}\biggr)\, , \\
  \xi\ &\longmapsto \
  \bm{w}^{}_{2}\,=\,\myfrac{1}{2}\biggl(\begin{matrix} 1 \\ \sqrt{3}
	\end{matrix}\biggr)\,, \\
        \lambda \ & \longmapsto \ \bm{w}^{}_{3}
        \,=\,\biggl(\begin{matrix} 4 + \sqrt{15} \, \\ 0
	\end{matrix}\biggr) \, =\, \lambda\bm{w}^{}_{1} , \\
        \xi \lambda & \longmapsto \
        \bm{w}^{}_{4}\,=\,\myfrac{1}{2}\biggl(\begin{matrix} 4 +
          \sqrt{15} \\ 4\sqrt{3} + 3\sqrt{5}
	\end{matrix}\biggr)\, =\, \lambda\bm{w}^{}_{2}. 
\end{split}
\end{equation} 
With this parametrisation, the expansive mapping of the inflation rule
reads
\[
  R \,=\, \myfrac{1}{6} \begin{pmatrix} 9+2\sqrt{15} & -\sqrt{3} \\
    -\sqrt{3} & -9-2\sqrt{15}  \end{pmatrix}\!.
\]
It can be understood as a concatenation of the reflection about the
$x$-axis, a rotation by
$\theta = - \arccos\bigl(\tfrac{9+2\sqrt{15}}{6\sqrt{\lambda}}\bigr)
\approx -5.9^{\circ}$, and a linear scaling by $\sqrt{\lambda}$. As
such, $R$ is a matrix square root of $\lambda\ts \mathmybb{1}_2$.

The $\star$-images of the generators $\bm{w}_i$ are given by the
embedding of $1$, $\overline{\xi}$, $\lambda'$ and
$\overline{\xi} \lambda'$, where $\overline{\,\cdot \,}$ denotes
complex conjugation and $':\QQ(\lambda) \to \QQ(\lambda)$ is the
non-trivial field automorphism $(a+b\lambda)'\mapsto a+8b -
b\lambda$. The concatenation of these two maps defines the $\star$-map
in $\QQ(\lambda, \xi)$, which is the non-trivial Galois isomorphism
fixing the subfield $\QQ(\sqrt{-5}\ts)$ of $\QQ(\lambda, \xi)$. For
the embedding of the generators, we obtain
\begin{equation}\label{eq:genSpecStar}
\begin{split}
  \bm{w}^{\star}_{1}\ & =\,\biggl(\begin{matrix} 1 \\ 0
  \end{matrix}\biggr)\, , \\
  \bm{w}^{\star}_{2}\ & =\,\myfrac{1}{2}\biggl(\begin{matrix} 1 \\
    -\sqrt{3}
  \end{matrix}\biggr)\,, \\
  \bm{w}^{\star}_{3} \ & =\,\biggl(\begin{matrix} 4 - \sqrt{15} \\ 0
  \end{matrix}\biggr) \, , \\
  \bm{w}^{\star}_{4} \ & =\,\myfrac{1}{2}\biggl(\begin{matrix} 4 -
    \sqrt{15} \\ -4\sqrt{3} + 3\sqrt{5} \end{matrix}\biggr)\, ,
\end{split}
\end{equation} 
which describes the entire $\star$-map due to $\QQ$-linearity. The
induced matrix $R^{\star}$ becomes
\[
  R^{\star} \,=\, \myfrac{1}{6} \begin{pmatrix} 9-2\sqrt{15} &
    \sqrt{3} \\ \sqrt{3} & -9+2\sqrt{15}
      \end{pmatrix}\!,
\]
and is a matrix square root of $\lambda^{\star}\ts \mathmybb{1}_2$.

Using this embedding, one obtains a Euclidean CPS with the lattice
given by the embedding of the return module \eqref{eq:genSpec}. Its
basis matrix reads
\begin{equation}\label{eq:lattSpectre}
  B \, = \, \myfrac{3}{2}\begin{pmatrix}
    -1 & -5-\sqrt{15} & 7+2\sqrt{15}  & -2 \\[2pt]
    -3\sqrt{3} -2\sqrt{5} & -\sqrt{3}-\sqrt{5} &
    3\sqrt{3} +2\sqrt{5} & 2\sqrt{3}+2\sqrt{5}\\[2pt]
    -1 & -5+\sqrt{15} & 7-2\sqrt{15}  & -2 \\[2pt]
    3\sqrt{3} -2\sqrt{5} & \sqrt{3}-\sqrt{5} &
    -3\sqrt{3} +2\sqrt{5} & -2\sqrt{3}+2\sqrt{5} \end{pmatrix}.
\end{equation}
The lattice has density $\tfrac{1}{3645}$. Via the form
\eqref{eq:genSpecStar}, one can take the $\star$-image of the tiling
control points to obtain the windows. Moreover, the control points
satisfy renormalisation equations as in the previous examples. In this
case, we obtain 54 equations for 54 point sets, 30 of which then give
the window; see~\cite{BGS2} for further details.

The total window is simply connected, with sixfold symmetry, but
without any mirror symmetry. It has fractal boundaries and, in
contrast to the Hat tiling, there are no other types of
boundaries. The window is shown in Figure~\ref{fig:Spectrewindow}.

\begin{figure}
\includegraphics[angle=90, origin=c,
  width=0.65\textwidth]{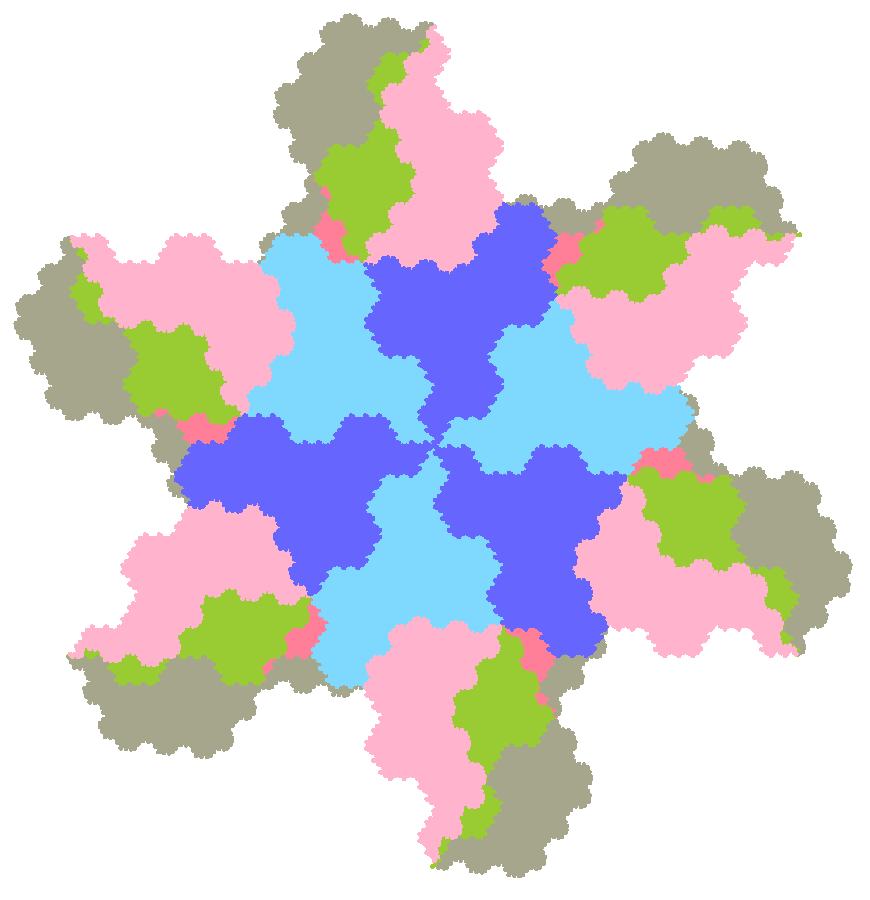}
\caption{The window for the control points of the Spectre tiling. The
  five different colours correspond to the five different types of
  control points. We used two shades of blue to visualise the six
  different regions in the central blue part. The window has sixfold
  symmetry, but no mirror symmetry.}
	\label{fig:Spectrewindow}
\end{figure} 

Since the window is a fundamental domain of a hexagonal lattice in
$\RR^2_{\inte}$, its volume is easily computable~\cite{BGS2} and reads
$\tfrac{135}{2}(4\sqrt{3}-3\sqrt{5}\,) \approx 14.85$. By a density
argument~\cite{Schl98}, we know that the control points of the CASPr
tiling form a subset of the model set with window from
Figure~\ref{fig:Spectrewindow}, where both have the same density. This
tiny difference stems from boundary points of the window and does not
affect the FB coefficients. This also implies that the diffraction and
dynamical spectra of the Spectre tiling are both pure point
\cite{BGS2}.

\begin{prop}[\cite{BGS2}]
  The CASPr tiling is MLD with a Euclidean model set derived from the
  CPS arising from the Minkowski embedding of\/
  $\mathcal{R}^{}_{_{\mathrm{CASPr}}}$ and the window with fractal
  boundaries shown in Figure~$\ref{fig:Spectrewindow}$. \qed
\end{prop}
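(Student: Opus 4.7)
The plan is to mirror, in two dimensions, exactly the strategy carried out in Section~\ref{sec:example} for the silver mean sequence and in the previous section for the CAP tiling. First, I would use the combinatorial inflation rule of \cite{Spectre} to write a self-referential system of \emph{exact set equations} (EMS) for the 54 translational orbit classes of control points, of the general form $\vL^{}_i = \bigcup_j \bigcup_{t \in T^{}_{ij}} R\,\vL^{}_j + t$, where $R$ is the expansive linear map of the stone inflation (a matrix square root of $\lambda\ts \mathmybb{1}_2$) and the displacement matrix $T = T^{}_{_{\CASPr}}$ encodes the relative positions of tiles in supertiles. Exactly as in \eqref{eq:EMS}, this system is directly expansive and has no general solution theory.

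Next, I would apply the star map $\star$ (the non-trivial Galois isomorphism of $\QQ(\lambda,\xi)$ over $\QQ(\sqrt{-5}\ts)$) to the EMS, obtaining a system of the form $W^{}_i = \bigcup_j \bigcup_{t \in T^{}_{ij}} R^{\star} W^{}_j + t^{\star}$ on the closures of the star images. Because $\lambda$ is a PV unit, $R^{\star}$ is a matrix square root of $\lambda^{\star}\mathmybb{1}_2$ with $|\lambda^{\star}|<1$, so the induced map on $\bigl(\cK\RR^2_{\inte}\bigr)^{54}$ equipped with the product Hausdorff metric is a strict contraction. Hutchinson's theorem \cite{Hutch81} then gives a unique non-empty compact fixed point, whose 30 components relevant to the control-point orbits assemble into the hexagonal window of Figure~\ref{fig:Spectrewindow}; colouring by type yields the five coloured regions.

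With the candidate windows in hand, I would carry out the standard model-set verification: compute the density of the CASPr control point set from the statistical (right PF) eigenvector of the inflation matrix, compute the density $\dens(\cL)\ts\vol(W) = \tfrac{1}{3645}\cdot\tfrac{135}{2}(4\sqrt{3}-3\sqrt{5}\,)$ of $\oplam(W)$ via the uniform distribution theorem for regular model sets \cite{Moody,Schl98}, and check that they agree. Combined with the inclusion $\vL^{}_{_{\CASPr}} \subseteq \oplam(W)$ that is automatic from the $\star$-image construction, this forces equality up to a subset of the window boundary, which is Lebesgue null and hence contributes density zero. Finally, MLD follows from the fact that the window partition into the coloured sub-regions is by construction compatible with the local type of tile at each control point, so the tiling can be reconstructed locally from the coloured Delone set and vice versa.

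The main technical obstacle, as in \cite{BGS,BGS2}, is not the contraction-mapping step but the bookkeeping: producing the 54-by-54 displacement matrix from the combinatorial hexagon inflation, verifying that the five chosen control-point orbits are indeed the intrinsic return-module orbits (so that the embedding via $\cR^{}_{_{\CASPr}}$ is correct), and confirming that the boundary of the fractal fixed-point set carries no points of $L^{\star}$, so that the density equality lifts to genuine set equality. Once these combinatorial data match those of \cite{Spectre,BGS2}, the proposition follows from the general theory of primitive unimodular inflation tilings with PV inflation factor, exactly as outlined for the one-dimensional guiding example.
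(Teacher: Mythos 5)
Your proposal is correct and follows essentially the same route as the paper, which in fact states this proposition without proof as an import from \cite{BGS2} and only sketches the argument in the surrounding text: renormalisation (set) equations for the $54$ control-point classes, the $\star$-mapped contractive IFS whose $30$ relevant components give the window, the density/uniform-distribution argument to upgrade the inclusion to equality up to a null boundary set, and MLD via the type-compatible colouring. The only cosmetic slip is calling the window ``hexagonal''; it is a fractally bounded fundamental domain of a hexagonal lattice in $\RR^2_{\inte}$, not a hexagon as in the CAP case.
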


\begin{figure}
\centering
\includegraphics[width=0.75\linewidth]{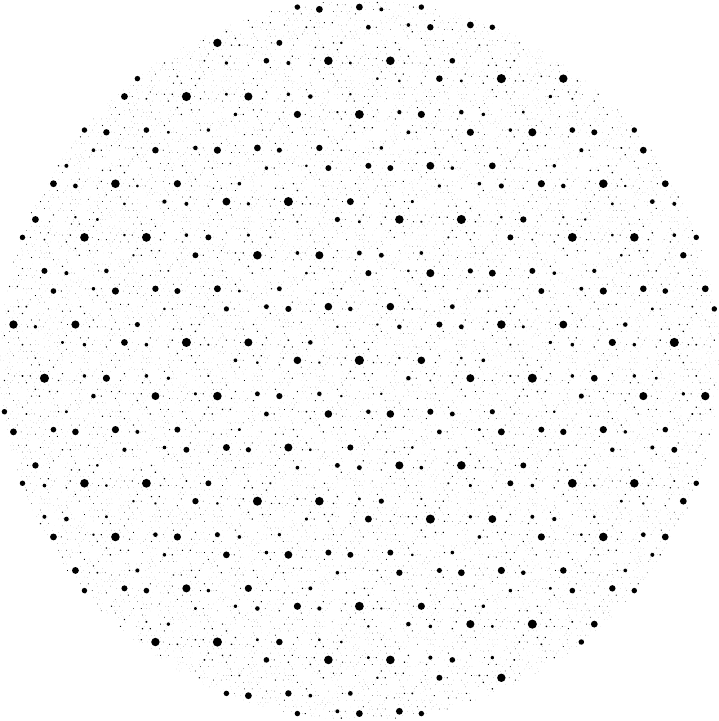}
\caption{Diffraction of the CASPr tiling control points with equal
  weights. The picture shows a disk of radius 0.5 around $k=0$. The
  intensities of the Bragg peaks are proportional to the area of the
  disks and the intensity of the brightest one equals
  $\tfrac{31-8\sqrt{15}}{972} \approx 0.000016597\dots$. One has
  sixfold rotational symmetry, but no reflection
  symmetry. $15$~iterations of the cocycle were used for the
  computation.}  \label{fig:CASPr-diff}
\end{figure}

Now, for the Fourier module (and the dynamical spectrum), one
considers the dual basis matrix $B^{\ast} = \bigl(B^{-1}\bigr)^{\top}$
of \eqref{eq:lattSpectre} and its $\pi$-projection. The generators
written as columns of a~matrix read
\[
  \myfrac{1}{90}\begin{pmatrix}
    -5+2\sqrt{15} & -10 +2\sqrt{15} & -5+2\sqrt{15} & -5+\sqrt{15} \\[2pt]
    -5\sqrt{3}+2\sqrt{5} & 10\sqrt{3}-8\sqrt{5} & 5\sqrt{3}-4\sqrt{5}
    & -5\sqrt{3}+5\sqrt{5} \end{pmatrix},
\]
and the Fourier module $L^{\circledast}_{_{\CASPr}}$ (the dynamical
spectrum) can be expressed as
\[
  L^{\circledast}_{_{\CASPr}} \, = \, \myfrac{\ii \sqrt{5}}{135}
  \ts\cR^{}_{_{\CASPr}} \ts ,
\]
with $\cR^{}_{_{\CASPr}}$ from \eqref{eq:CASPR_ideals}, so it forms a
fractional ideal. We refer the reader to~\cite{BGS2} for further
details and a number-theoretic description of the return and Fourier
modules.

For the diffraction amplitudes, we again employ the cocycle
method. This time, the displacement matrix has size $54\times 54$, but
since several metatiles form clusters, one can reduce the dimension to
$30 \times 30$, see~\cite{BGS2} for further details.

For the total intensity, one has to consider the weighted sum, so one
obtains
\[
  I^{}_{_{\CASPr}}(k) \, = \, \begin{cases}
    \bigl|H^{}_{_{\CASPr}}(k^{\star})\bigr|^2, & \mbox{if}
    \ k\in L^{\circledast}_{_{\CASPr}}, \\
    0, & \mbox{otherwise,}  \end{cases}
\]
with
\[
  H_{_{\CASPr}}(k^{}_{\inte}) \, = \,
  \frac{\dens\bigl(\vL^{}_{_{\CASPr}}\bigr)}{\vol\bigl(
    W^{}_{_{\CASPr}} \bigr)}\sum_{i}\alpha^{}_{i} \ts
  \widecheck{\bm{1}^{}_{W_{_{\CASPr,i}}}}\bigl(k^{}_{\inte}\bigr),
\]
where $\alpha^{}_{i}\in \CC$ denotes the weight of tiles of type
$i$. We note that, in order to obtain the diffraction intensities, one
only has to choose non-zero weights for $30$ (instead of $54$)
elements.  The diffraction pattern for equal weights (near $0$) is
shown in Figure~\ref{fig:CASPr-diff}. It reflects the properties of
the Spectre tiling: It exhibits sixfold rotational symmetry, while
mirror symmetry is absent.

The CASPr tiling can be reprojected, and various tilings related to
the Spectre tiling can be obtained. First, we start by recovering a
tiling by regular hexagons, which is combinatorially equivalent to the
tiling by Spectre clusters and plays a pivotal role in all cohomological
considerations in~\cite{BGS2}. The deformation matrix reads
\begin{equation}\label{eq:def_hex}
    D^{}_{\bhex} \, = \, \begin{pmatrix}
        -1 & 0 \\ 0 & 1 \end{pmatrix},
\end{equation}
and the new return module becomes a scaled and rotated hexagonal
lattice of rank $2$ (as one would expect from a hexagonal tiling),
\[
  \cR^{}_{\bhex} \, = \, 6\ts\ii\sqrt{5}\,\ZZ[\xi] \, = \,
  (8-16\xi-2\lambda+4\lambda\xi)\,\ZZ[\xi] \ts .
\]
The reprojection of the generators $g^{}_{i}$ is shown in
Figure~\ref{fig:hex_deformation}. We note at this point that the
resulting tiling consists of \emph{combinatorial} hexagons (which are
not regular hexagons, but geometric shapes rather similar to the CASPr
tiles, see \cite[Fig.~9]{BGS2}), but within the MLD class of this
tiling, one also finds a tiling with regular hexagons.

\begin{figure}
\begin{subfigure}{.5\textwidth}
\centering
\includegraphics[ width=0.875\linewidth]{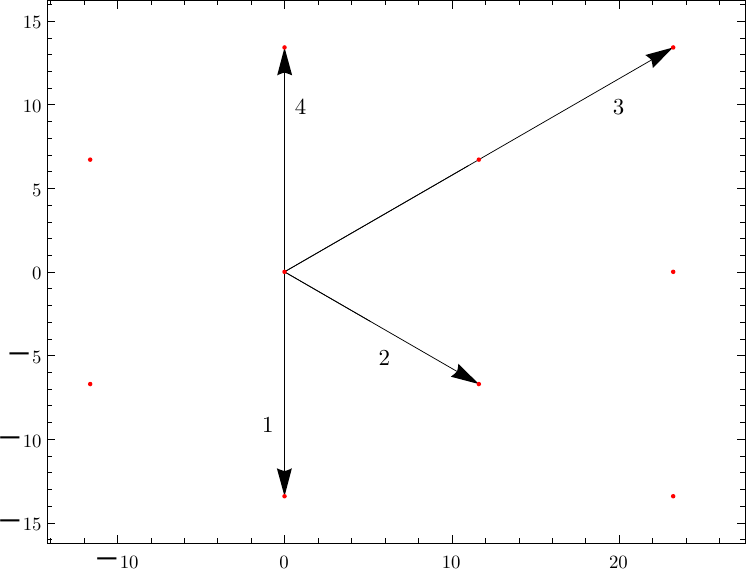}
\caption{}
	\label{fig:hex_deformation}
\end{subfigure}%
\begin{subfigure}{.5\textwidth}
	\centering
	\includegraphics[width=0.9\linewidth]{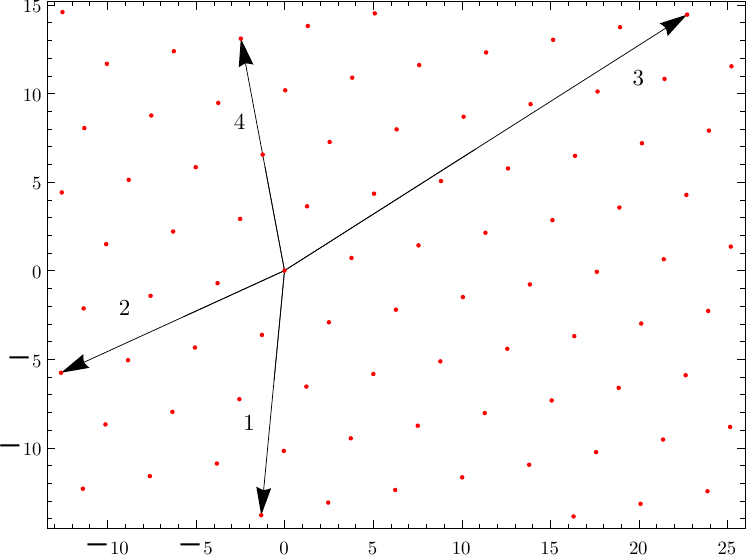}
	\caption{}
	\label{fig:HT_deformation}
\end{subfigure}
\caption{Reprojected generators $g^{}_{i}$
  (Eq.~\eqref{eq:CASPr_generators}) of the return module of the CASPr
  tiling. Figure (A) shows the deformation to the regular hexagon
  tiling and the underlying lattice $6\ts \ii\sqrt{5}\,\ZZ[\xi]$ (red
  dots). Figure (B) shows the generators for the Hat--Turtle tiling
  and the lattice
  $\tfrac{2}{67} \bigl( -185 -206\xi +20\lambda +44\lambda\xi
  \bigr)\ZZ[\xi]$.}
  \label{fig:reprogen}
\end{figure}

This implies that the control points form a~lattice subset, and hence
the diffraction image is lattice-periodic with the lattice of periods
being given by $\cR^{\ast}_{\bhex}$
\begin{equation}\label{eq:hex_periods}
  \cR^{\ast}_{\bhex} \, = \, \myfrac{\sqrt{15}}{45} \, \ZZ[\xi]
  \, = \, \myfrac{\lambda -4}{45} \, \ZZ[\xi] \ts . 
\end{equation}
We illustrate the diffraction image of the hexagon tiling in
Figure~\ref{fig:hex_diff}.

\begin{figure}
\centering
\includegraphics[width=0.5\linewidth]{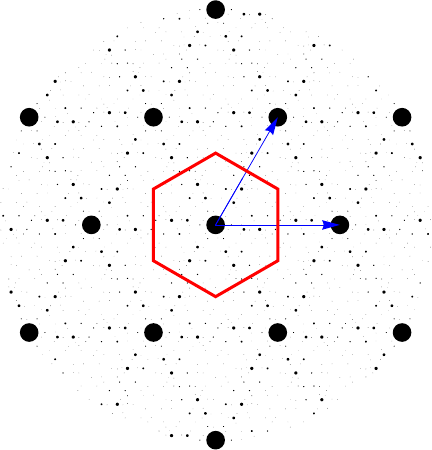}
\caption{Diffraction image of the aperiodic hexagon tiling. The
  diffraction measure is lattice periodic, with the lattice of periods
  given by \eqref{eq:hex_periods}. Its generators are indicated by the
  blue arrows, and a fundamental domain with the red hexagon. The
  picture shows the intensities of the Bragg peaks in the intersection
  of the Fourier module with a ball of radius~0.15 in $\RR^2$. The
  intensity of the central peak is the same as for the CASPr
  tiling. $10$ iterations of the cocycle were used for the
  computation; see \cite{Jan} for further
  details. } \label{fig:hex_diff}
\end{figure}

The reprojection to another lattice tiling --- the Hat--Turtle (HT)
tiling~\cite{Spectre} --- is more complicated, and the lattice is much
finer.  The deformation matrix reads
\begin{equation}\label{eq:def_ht}
  D^{}_{_{\mathrm{HT}}} \, = \, \myfrac{1}{201} \begin{pmatrix}
    44\sqrt{15}-231 & 80\sqrt{3}-84\sqrt{5}\\
    80\sqrt{3}-84\sqrt{5} & 231-44\sqrt{15} \end{pmatrix}.
\end{equation}

\begin{figure}
\centering
\includegraphics[width=0.8\linewidth]{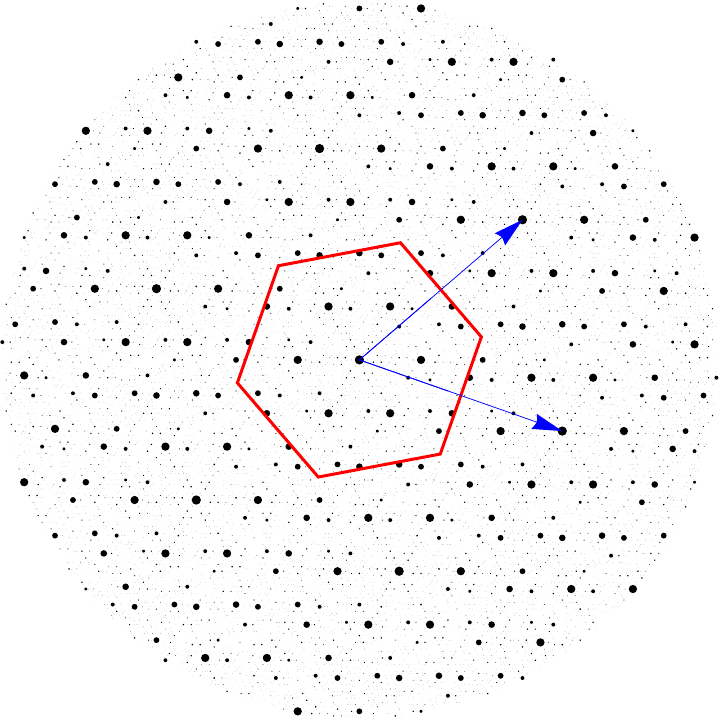}
\caption{Diffraction image of the Hat--Turtle tiling. The diffraction
  is lattice periodic, with the lattice of periods given by
  \eqref{eq:HT_per}, whose generators are indicated by the blue
  arrows. A fundamental domain is marked by the red hexagon. The
  picture shows the intensities of the Bragg peaks restricted to the
  intersection of the Fourier module with a ball of radius~0.5 in
  $\RR^2$. The intensity of the central peak is the same as in the
  undeformed case. $10$ iterations of the cocycle
  were used for the computation.} \label{fig:HT_diff}
\end{figure}

The reprojection of $\cR^{}_{_{\CASPr}}$ yields a $\ZZ$-module
$\cR^{}_{_{\mathrm{HT}}}$ of rank $2$, as the HT tiling is again
a~lattice tiling, this time with return module
\[
  \cR^{}_{_{\mathrm{HT}}} \, = \, \myfrac{1}{67}\bigl(-240 +
  84\sqrt{15} -30\ts\ii \sqrt{3} +132\ts\ii \sqrt{5}\, \bigr) \ZZ[\xi]
  \, = \, \myfrac{2}{67} \bigl( -185 -206\xi +20\lambda +44\lambda\xi
  \bigr)\ZZ[\xi] \ts .
\]
For its dual module, one finds
\begin{equation}\label{eq:HT_per}
  \cR^{\ast}_{_{\mathrm{HT}}} \, = \, \myfrac{1}{45}
  \bigl(5+2\sqrt{15}-2\ts \ii\sqrt{5} \, \bigr)\ZZ[\xi]
  \, = \, \myfrac{1}{135} \bigl(-17+16\xi+8\lambda -4\lambda\xi
  \bigr)\ZZ[\xi] \ts , 
\end{equation}
which provides the lattice of periods of the diffraction pattern as in
the previous case. Note that the length of the period is approximately
$3.5$ times larger than in the case of the hexagon tiling. The lattice
constant of $\cR^{\ast}_{_{\mathrm{HT}}}$ reads
$\sqrt{\frac{4\lambda+5}{405}}$.  Figure~\ref{fig:HT_diff} shows the
intensities of the diffraction measure around the origin.

\begin{figure}
\centering
\includegraphics[width=0.8\linewidth]{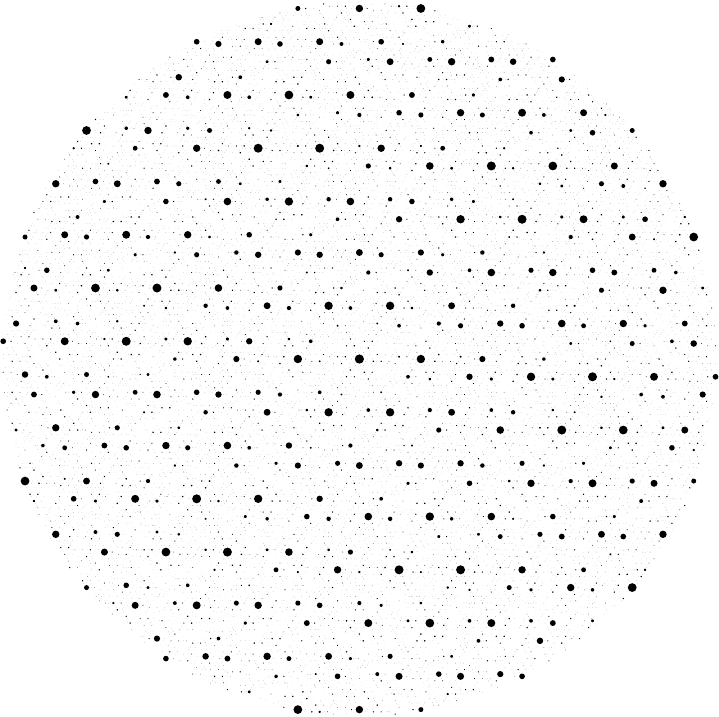}
\caption{Diffraction image of the Spectre tiling. The picture shows
  the intensities of the Bragg peaks restricted to the intersection of
  the Fourier module with a ball of radius~0.5 in $\RR^2$. The
  intensity of the central peak is the same as in the CASPr tiling. To
  compute the image, 10 iterations of the cocycle were used. }
	\label{fig:Spectre_diff}
\end{figure}
      
Finally, the Spectre tiling is a further deformation of the HT tiling
in the following sense. Hats and Turtles have edges of two different
lengths. The Spectre tiling is obtained by rescaling these edges such
that their lengths become equal (maintaining the edge
directions). Combining the deformation of the HT tiling with this
additional deformation results in a~relatively simple deformation
matrix, one obtains
\[
  D^{}_{_{\mathrm{Sp}}} \, = \, \myfrac{3-\sqrt{15}}{6} \begin{pmatrix}
    \sqrt{5} & 1 \\  1 & -\sqrt{5} \end{pmatrix}.
\]
The generators \eqref{eq:CASPr_generators} are reprojected to the
generators $h^{}_{i}$ of the return module of the Spectre tiling and
read
\begin{align*}
  h^{}_{1} & = \, \myfrac{3}{2}\begin{pmatrix}
    -1+4\sqrt{3}-3\sqrt{5} \\[1pt] 12-3\sqrt{3}-2\sqrt{5}-3\sqrt{15}
  \end{pmatrix}, \\
  h^{}_{2} & = \, \myfrac{1}{6}\begin{pmatrix}
    60-98\sqrt{3}+75\sqrt{5}+15\sqrt{15}\, \\[1pt]
                                 30+6\sqrt{3}+9\sqrt{5}-7\sqrt{15}
    \end{pmatrix}, \\
    h^{}_{3} & = \, \myfrac{1}{2}\begin{pmatrix}
      21-42\sqrt{3}+33\sqrt{5}+6\sqrt{15}\, \\[1pt]
      -6+9\sqrt{3}+6\sqrt{5}+3\sqrt{15} \end{pmatrix}, \\
    h^{}_{4} & = \, \myfrac{3}{2}\begin{pmatrix}
      -6-3\sqrt{3}+3\sqrt{5} \\[1pt] -33+6\sqrt{3}+6\sqrt{5}+9\sqrt{15}
    \end{pmatrix}. \\
\end{align*}
When interpreted as complex numbers, the generators belong to
$\QQ(\beta)$ with $\beta^8-3\beta^6+8\beta^4-3\beta^2+1=0$, a number
field containing $\xi,\, \lambda$ and the twelfth root of unity, which
is not surprising due to the geometry of the Spectre tiles.  We note
that the control points of the HT tiling and the Spectre tiling do not
differ too much, so the diffraction pattern looks very similar in both
cases. The diffraction of the Spectre is shown in
Figure~\ref{fig:Spectre_diff}. We also include a comparison of the
diffraction of the HT and Spectre tilings around a~point
$\tfrac{10}{45}\bigl(5+2\sqrt{15}, \, -2\sqrt{5}\ts \bigr)^{\top}$ to
show the significant difference in both diffraction patterns, see
Figure~\ref{fig:HT_Spectre}.

\begin{figure}
\centering
\includegraphics[width=0.95\linewidth]{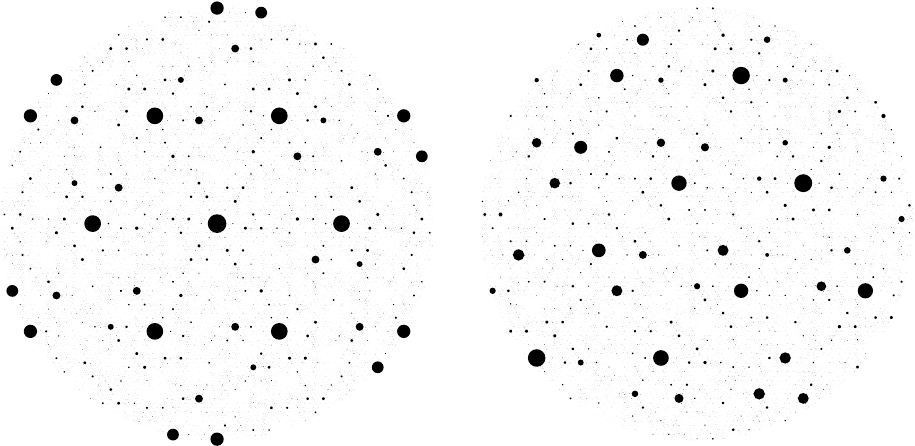}
\caption{Diffraction image of the Hat--Turtle (left) and Spectre
  (right) tiling. Both pictures show the intensities of the Bragg
  peaks restricted to the intersection of the Fourier module with a
  ball of radius~0.15 around
  $\tfrac{10}{45}\bigl(5+2\sqrt{15}, \, -2\sqrt{5}\ts \bigr)^{\top}$
  in $\RR^2$. The central point is a lattice point in the lattice of
  periods of the diffraction measure of HT tiling given by
  \eqref{eq:HT_per}. $10$ iterations of the cocycle were
  used for these figures.} \label{fig:HT_Spectre}
\end{figure}

We summarise the observations from the last paragraphs in the
following theorem.

\begin{theorem}
  The set of control points of the Spectre, Hat--Turtle, and
  (combinatorial) hexagon tilings are deformed model sets, and can be
  interpreted as reprojections. They are obtained from the set of
  control points of the CASPr tiling via the deformation mappings
\allowdisplaybreaks
\begin{align*}
D^{}_{_{\mathrm{Sp}}} \, &= \, \myfrac{3-\sqrt{15}}{6} 
\begin{pmatrix}\sqrt{5} & 1 \\1 & -\sqrt{5}
\end{pmatrix}, \\[2pt]
D^{}_{_{\mathrm{HT}}} \,& = \, \myfrac{1}{201} 
\begin{pmatrix} 44\sqrt{15}-231 & 80\sqrt{3}-84\sqrt{5}\\
80\sqrt{3}-84\sqrt{5} & 231-44\sqrt{15}
\end{pmatrix}, \\[2pt]
D^{}_{\bhex} \,& = \, \begin{pmatrix}-1 & 0 \\ 0 & 1
\end{pmatrix}.  
\end{align*}    
The Fourier module of all these tilings is\/ 
$L^{\circledast}_{_{\mathrm{CASPr}}}$, and the diffraction
intensities are
\[
  I^{}_{\bullet}(k) \, = \, \begin{cases}
    \bigl|H^{}_{_{\CASPr}}(k^{\star}-D^{\top}_{\bullet} k)\bigr|^2, &
    \mbox{if}
    \ k\in L^{\circledast}_{_{\CASPr}}, \\
    0, & \mbox{otherwise,} \end{cases}
\] 
where\/ $\bullet \in \{\mathrm{Sp}, \, \mathrm{HT}, \, \mathrm{hex} \}$.
\qed
\end{theorem}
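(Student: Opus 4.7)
The strategy is to reduce each of the three claims to an application of the deformation theorem (the preceding Theorem~3.2 in the Hats section, stated for general Euclidean CPSs), once the deformation matrices have been identified from the explicit action on the return-module generators of CASPr.

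First, I would recall that the control points of the CASPr tiling are (up to points of density zero) a regular model set in the Euclidean CPS built from the Minkowski embedding of $\cR^{}_{_{\CASPr}}$ with the window of Figure~\ref{fig:Spectrewindow}, as stated in the Proposition above. Using the generators $g^{}_{1},\ldots,g^{}_{4}$ from \eqref{eq:CASPr_generators} and their images $\bm{w}^{}_{i}$, $\bm{w}^{\star}_{i}$ in \eqref{eq:genSpec} and \eqref{eq:genSpecStar}, one obtains explicit bases of the physical and internal lattice projections. For each of the three target tilings, the shape change is a linear reprojection, so it is of the form $\bm{w}^{}_{i} \mapsto \bm{w}^{}_{i} + D\bm{w}^{\star}_{i}$ for a unique $2{\times}2$ matrix $D$ (four vector equations in four matrix unknowns, which form a consistent system because the corresponding pairs $(\bm{w}^{}_i,\bm{w}^{\star}_i)$ span $\RR^2\oplus \RR^2_{\inte}$). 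Hence $D$ is uniquely determined by the image generators of the new return module.

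Second, I would compute $D$ in each case. For the hexagon reprojection, one matches $\bm{w}^{}_{i} + D^{}_{\bhex}\bm{w}^{\star}_{i}$ to the generators of $6\ts\ii\sqrt{5}\,\ZZ[\xi]$ determined by Figure~\ref{fig:hex_deformation}; solving this yields the diagonal form \eqref{eq:def_hex}. For the Hat--Turtle case, the matching is done against the generators $h^{\mathrm{HT}}_{i}$ read off from Figure~\ref{fig:HT_deformation}, producing \eqref{eq:def_ht}. For the Spectre itself, I would first pass through $D^{}_{_{\mathrm{HT}}}$ and then compose with the further deformation that equalises the two HT edge lengths (while preserving their directions); the composition simplifies, via the explicit form of $\beta$ and the factorisation $3-\sqrt{15}$, to the claimed $D^{}_{_{\mathrm{Sp}}}$. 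The only genuinely computational step here is verifying that each candidate $D$ actually maps \emph{all four} generators of $\cR^{}_{_{\CASPr}}$ correctly, not just two; this is the point where the special arithmetic of $\QQ(\alpha)$ (with $\alpha^4=5\alpha^2-25$) enters and is the main bookkeeping obstacle.

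Third, with $D^{}_{\bullet}$ in hand, each reprojected control set has the form $\{x + D^{}_{\bullet}\ts x^{\star} : x\in\vL^{}_{_{\CASPr}}\}$, which is a deformed model set in the sense of \cite{BL05,BerDun00}. Theorem~3.2 then gives the FB coefficients as $A^{}_{\bullet}(k) = H^{}_{_{\CASPr}}(k^{\star}-D^{\top}_{\bullet}k)$ for $k\in L^{\circledast}$, and they vanish off $L^{\circledast}$. Squaring absolute values yields the intensity formula in the statement. Since a linear reprojection produces a topological conjugacy of the underlying dynamical system (by the Clark--Sadun results already invoked in Section~\ref{sec:example}), the dynamical (equivalently, diffraction) spectrum is preserved, so the Fourier module of each reprojected tiling is indeed $L^{\circledast}_{_{\CASPr}}$. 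The possible shrinkage of the \emph{support} of $\widehat{\gamma}$ in the lattice cases (hex and HT) is then an automatic consequence of the vanishing of $H^{}_{_{\CASPr}}(k^{\star}-D^{\top}_{\bullet}k)$ on the cosets complementary to the sublattice, so no separate argument is required beyond the formula itself.
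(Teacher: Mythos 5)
Your proposal is correct and follows essentially the same route as the paper, which presents this theorem as a summary of the preceding derivations: the deformation matrices are read off by matching the reprojected return-module generators (Figures~\ref{fig:hex_deformation} and \ref{fig:HT_deformation}, plus the edge-equalising step for the Spectre), and the intensity formula is then an application of the general deformation theorem at the end of Section~\ref{sec:Hats}, with the Fourier module preserved by topological conjugacy. Your added remarks on the uniqueness/consistency of $D$ from the overdetermined generator equations and on the lattice-periodic cases are compatible with, though not spelled out in, the paper's argument.
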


\bigskip

\section*{Appendix A ---
   \texorpdfstring{$T^{}_{\CAP}$}{TCAP}}

Here, we give the non-empty matrix blocks for \eqref{eq:TCAP}.
\label{sec:AppA}
\[
  T^{}_{12} \,= \,\begin{pmatrix}
    \bigl\{\substack{{-}\tau{+}1\\{-}\tau\xi{-}2\xi}\}&
    \vn&\vn&\vn&\vn&\vn\\
    \vn& \bigl\{\substack{\tau{+}2\\{-}2\tau\xi{-}\xi}\bigr\}
    &\vn&\vn&\vn&\vn\\
    \vn&\vn& \bigl\{\substack{2\tau{+}1\\
      {-}\tau\xi{+}\xi}\bigr\}&\vn&\vn&\vn\\
    \vn&\vn&\vn& \bigl\{\substack{\tau{-}1\\
      {+}\tau\xi{+}2\xi}\bigr\}& \vn&\vn\\
    \vn&\vn&\vn&\vn&
    \bigl\{\substack{{-}\tau{-}2\\{+}2\tau\xi{+}\xi}\bigr\} &\vn\\
    \vn&\vn&\vn&\vn&\vn&
    \bigl\{\substack{{-}2\tau{-}1\\{+}\tau\xi{-}\xi}\bigr\}\\
\end{pmatrix},
\]

\[
T^{}_{21} \, = \, 
\begin{pmatrix}
  \vn & \bigl\{\substack{4\tau{+}1 \\+\tau\xi{+}\xi}\bigr\}&
  \vn&\vn&\vn&\vn\\
  \vn&\vn&  \bigl\{\substack{-\tau{-}1\\
    +5\tau\xi{+2\xi}}\bigr\}&\vn&\vn&\vn\\
  \vn&\vn&\vn& \bigl\{\substack{-5\tau{-}2\\
    +4\tau\xi{+}\xi}\bigr\}&\vn&\vn\\
  \vn&\vn&\vn&\vn &
  \bigl\{\substack{-4\tau{-}1 \\-\tau\xi{-}\xi}\bigr\}& \vn\\
  \vn&\vn&\vn&\vn&\vn&
  \bigl\{\substack{\tau{+}1\\-5\tau\xi{-}2\xi}\bigr\}\\
  \bigl\{\substack{5\tau{+}2\\-4\tau\xi{-}\xi}\bigr\}&\vn&
  \vn&\vn&\vn&\vn\\
\end{pmatrix},
\]

\[
T^{}_{22} \, = \, 
\begin{pmatrix}
  \bigl\{0,\substack{3\tau+1\\-3\tau\xi-2\xi}\bigr\} &\vn&
  \bigl\{\substack{4\tau{+}1 \\+\tau\xi{+}\xi}\bigr\}& \vn&
  \vn&\vn\\
  \vn&\bigl\{0,\substack{3\tau+2\\-\xi}\bigr\} &\vn &
  \bigl\{\substack{-\tau{-}1\\+5\tau\xi{+2\xi}}\bigr\}&\vn
  &\vn\\
  \vn&\vn&\bigl\{0,\substack{1\\+3\tau\xi+\xi}\bigr\} &\vn&
  \bigl\{\substack{-5\tau{-}2\\+4\tau\xi{+}\xi}\bigr\}
  &\vn\\
  \vn&\vn&\vn&\bigl\{0,\substack{-3\tau-1\\
    +3\tau\xi+2\xi}\bigr\}& \vn & \bigl\{\substack{-4\tau{-}1
    \\-\tau\xi{-}\xi}\bigr\}\\
  \bigl\{\substack{\tau{+}1\\-5\tau\xi{-}2\xi}\bigr\}&\vn&
  \vn&\vn&\bigl\{0,\substack{-3\tau-2\\+\xi}\bigr\}&
  \vn \\
  \vn&\bigl\{\substack{5\tau{+}2\\-4\tau\xi{-}\xi}\bigr\}&
  \vn&\vn&\vn&\bigl\{0,\substack{-1\\
    -3\tau\xi-\xi}\bigr\}\\
\end{pmatrix},
\]

\[
T^{}_{23} \, = \, 
\begin{pmatrix}
  \vn & \bigl\{ \substack{8\tau+4\\-4\tau\xi-2\xi}\bigr\} &
  \bigl\{ \substack{4\tau+1\\+\tau\xi+\xi}\bigr\}&\vn &\vn &\vn\\
  \vn&\vn&\bigl\{ \substack{4\tau+2\\+4\tau\xi+2\xi}\bigr\} &
  \bigl\{ \substack{-\tau-1\\+5\tau\xi+2\xi}\bigr\} &\vn &\vn\\
  \vn &\vn &\vn & \bigl\{ \substack{-4\tau-2\\+8\tau\xi+4\xi} \bigr\}
  & \bigl\{ \substack{-5\tau-2\\+4\tau\xi+\xi} \bigr\} & \vn\\
  \vn & \vn &\vn &\vn& \bigl\{
  \substack{-8\tau-4\\+4\tau\xi+2\xi}\bigr\}
  & \bigl\{ \substack{-4\tau-1\\-\tau\xi-\xi}\bigr\}\\
  \bigl\{ \substack{\tau+1\\-5\tau\xi-2\xi}\bigr\} &\vn &\vn&\vn&\vn
  &\bigl\{ \substack{-4\tau-2\\-4\tau\xi-2\xi}\bigr\} \\
  \bigl\{ \substack{4\tau+2\\-8\tau\xi-4\xi} \bigr\}& \bigl\{
  \substack{5\tau+2\\-4\tau\xi-\xi} \bigr\} &
  \vn&\vn &\vn &\vn  \\
\end{pmatrix},
\]

\[
T^{}_{24} \, = \, 
\begin{pmatrix}
  \vn & \bigl\{ \substack{8\tau+4\\-4\tau\xi-2\xi}\bigr\} &
  \bigl\{ \substack{4\tau+1\\+\tau\xi+\xi}\bigr\} & \vn & \vn & \vn\\
  \vn & \vn & \bigl\{ \substack{4\tau+2\\+4\tau\xi+2\xi}\bigr\} &
  \bigl\{\substack{-\tau-1\\+5\tau\xi+2\xi} \bigr\} &\vn &\vn\\
  \vn & \vn & \vn &\bigl\{ \substack{-4\tau-2\\+8\tau\xi+4\xi}\bigr\}
  & \bigl\{ \substack{-5\tau-2\\+4\tau\xi+\xi} \bigr\} & \vn \\
  \vn & \vn & \vn &\vn & \bigl\{
  \substack{-8\tau-4\\+4\tau\xi+2\xi}\bigr\}
  & \bigl\{ \substack{-4\tau-1\\-\tau\xi-\xi}\bigr\} \\
  \bigl\{\substack{\tau+1\\-5\tau\xi-2\xi} \bigr\} &\vn &\vn&\vn & \vn
  &
  \bigl\{ \substack{-4\tau-2\\-4\tau\xi-2\xi}\bigr\} \\
  \bigl\{ \substack{4\tau+2\\-8\tau\xi-4\xi}\bigr\} & \bigl\{
  \substack{5\tau+2\\-4\tau\xi-\xi} \bigr\} &
  \vn&\vn & \vn& \vn  \\
\end{pmatrix},
\]

\[
T^{}_{32} \, = \, 
\begin{pmatrix}
  \vn & \vn & \vn & \bigl\{ \substack{ -4\tau -3 \\+5\tau\xi +3 \xi}
  \bigr\} & \bigl\{ \substack{-\tau-1 \\+5\tau\xi + 2\xi} \bigr\} &
  \bigl\{ \substack{ -5\tau -3 \\+\tau\xi} \bigr\} \\
  \bigl\{ \substack{ -\tau\\-4\tau\xi-3\xi} \bigr\} & \vn & \vn &\vn &
  \bigl\{ \substack{ -5\tau-3 \\+\tau\xi}\bigr\} &
  \bigl\{ \substack{-5\tau -2\\+4\tau\xi + \xi}\bigr\}\\
  \bigl\{ \substack{-4\tau-1\\-\tau\xi -\xi} \bigr\} & \bigl\{
  \substack{ 4\tau +3 \\-5\tau\xi-3\xi}\bigr\} &\vn &\vn &\vn
  & \bigl\{ \substack{-\tau\\-4\tau\xi-3\xi}\bigr\}\\
  \bigl\{ \substack{ 4\tau +3 \\-5\tau\xi -3 \xi} \bigr\} & \bigl\{
  \substack{\tau+1 \\-5\tau\xi - 2\xi} \bigr\} &
  \bigl\{ \substack{ 5\tau +3 \\-\tau\xi} \bigr\} &\vn & \vn & \vn  \\
  \vn & \bigl\{ \substack{ 5\tau+3 \\-\tau\xi}\bigr\} & \bigl\{
  \substack{5\tau +2\\-4\tau\xi - \xi}\bigr\}&
  \bigl\{ \substack{ \tau\\+4\tau\xi+3\xi} \bigr\} & \vn & \vn \\
  \vn &\vn & \bigl\{ \substack{\tau\\+4\tau\xi+3\xi}\bigr\}& \bigl\{
  \substack{4\tau+1\\+\tau\xi +\xi} \bigr\} &
  \bigl\{ \substack{ -4\tau -3 \\+5\tau\xi+3\xi}\bigr\} &\vn \\
\end{pmatrix},
\]

\[ 
T^{}_{33} \,=\, T^{}_{34} \, = \, 
\begin{pmatrix}
  \vn & \vn & \vn & \vn & \vn &
  \bigl\{ \substack{-5\tau-3\\+\tau\xi} \bigr\}\\
  \bigl\{ \substack{ -\tau \\-4\tau\xi -3\xi } \bigr \} & \vn & \vn
  &\vn &\vn &\vn \\
  \vn & \bigl\{ \substack{4\tau +3 \\ - 5\tau\xi-3\xi } \bigr\} & \vn
  &\vn & \vn &\vn \\
  \vn & \vn & \bigl\{ \substack{5\tau+3\\-\tau\xi} \bigr\}
  &\vn & \vn & \vn \\
  \vn &\vn &\vn &\bigl\{ \substack{ \tau \\+4\tau\xi +3\xi } \bigr \}
  & \vn & \vn \\ \vn & \vn &\vn& \vn &
  \bigl\{ \substack{-4\tau -3 \\ + 5\tau\xi+3\xi} \bigr\} & \vn  \\
\end{pmatrix},
\]

\[
T^{}_{42} \, = \, 
\begin{pmatrix}
  \vn & \bigl\{ \substack{ 3\tau +1 \\-3\tau\xi-2\xi} \bigr\} & \vn &
  \bigl\{ \substack{-\tau-2\\+2\tau\xi+\xi} \bigr\} &
  \vn & \bigl\{ \substack{-2\tau-2\\-2\tau\xi -2\xi} \bigr\}\\
  \bigl\{ \substack{ 2\tau +2 \\ -4\tau\xi-4\xi}\bigr\} &\vn & \bigl\{
  \substack{3\tau +2 \\ -\xi} \bigr\} & \vn &
  \bigl\{ \substack{-2\tau-1\\+\tau\xi-\xi} \bigr\} & \vn\\
  \vn & \bigl\{ \substack{4\tau+4\\-2\tau\xi-2\xi} \bigr\} & \vn &
  \bigl\{ \substack{1\\+3\tau\xi+\xi} \bigr\} &
  \vn & \bigl\{ \substack{-\tau+1\\-\tau\xi-2\xi}\bigr\} \\
  \bigl\{ \substack{\tau+2\\-2\tau\xi-\xi} \bigr\} & \vn & \bigl\{
  \substack{2\tau+2\\+2\tau\xi +2\xi} \bigr\}& \vn &
  \bigl\{ \substack{ -3\tau -1 \\+3\tau\xi+2\xi} \bigr\} & \vn \\
  \vn & \bigl\{ \substack{2\tau+1\\-\tau\xi+\xi} \bigr\} & \vn
  &\bigl\{ \substack{ -2\tau -2 \\ +4\tau\xi+4\xi}\bigr\}
  &\vn &\bigl\{ \substack{-3\tau -2 \\ +\xi} \bigr\} \\
  \bigl\{ \substack{-1\\-3\tau\xi-\xi} \bigr\} &\vn & \bigl\{
  \substack{\tau-1\\+\tau\xi+2\xi}\bigr\} & \vn & \bigl\{
  \substack{-4\tau-4\\+2\tau\xi+2\xi} \bigr\} &\vn
\end{pmatrix},
\]

\[
T^{}_{43} \, = \, 
\begin{pmatrix}
  \vn & \bigl\{ \substack{ 3\tau +1\\-3\tau\xi-2\xi} \bigr\} & \vn &
  \vn&
  \bigl\{ \substack{-9\tau-6\\+6\tau\xi+3\xi} \bigl\} & \vn\\
  \vn& \vn& \bigl \{ \substack{3\tau+2\\-\xi} \bigr\} & \vn &
  \vn & \bigl\{ \substack{-6\tau -3 \\-3\tau\xi-3\xi}\bigr\}\\
  \bigl\{ \substack{3\tau +3\\-9\tau\xi-6\xi} \bigr\} & \vn & \vn &
  \bigl\{ \substack{1\\+3\tau\xi+\xi} \bigr\} &
  \vn & \vn\\
  \vn& \bigl\{ \substack{9\tau+6\\-6\tau\xi-3\xi} \bigl\} & \vn& \vn &
  \bigl\{ \substack{ -3\tau -1\\+3\tau\xi-2\xi} \bigr\} &\vn & \\
  \vn & \vn & \bigl\{ \substack{6\tau +3 \\+3\tau\xi+3\xi}\bigr\} &
  \vn & \vn & \bigl \{ \substack{-3\tau-2\\+\xi} \bigr\} \\
  \bigl\{ \substack{-1\\-3\tau\xi-\xi} \bigr\} &\vn & \vn & \bigl\{
  \substack{-3\tau -3\\+9\tau\xi+6\xi} \bigr\} & \vn & \vn
\end{pmatrix},
\]

\[
T^{}_{44} \, = \, 
\begin{pmatrix}
  \bigl\{ \substack{2\tau+1\\-7\tau\xi-5\xi}\bigr\} & \bigl\{
  \substack{ 3\tau +1\\-3\tau\xi-2\xi} \bigr\} & \vn & \vn&
  \bigl\{ \substack{-9\tau-6\\+6\tau\xi+3\xi} \bigl\} & \vn\\
  \vn& \bigl\{\substack{7\tau+5\\-5\tau\xi-4\xi}\bigr\} & \bigl \{
  \substack{3\tau+2\\-\xi} \bigr\} & \vn &
  \vn & \bigl\{ \substack{-6\tau -3 \\-3\tau\xi-3\xi}\bigr\}\\
  \bigl\{ \substack{3\tau +3\\-9\tau\xi-6\xi} \bigr\} & \vn &
  \bigl\{\substack{5\tau+4\\+2\tau\xi+\xi}\bigr\} & \bigl\{
  \substack{1\\+3\tau\xi+\xi} \bigr\} & \vn & \vn\\
  \vn& \bigl\{ \substack{9\tau+6\\-6\tau\xi-3\xi} \bigl\} & \vn&
  \bigl\{ \substack{-2\tau-1\\+7\tau\xi+5\xi} \bigr\} &
  \bigl\{ \substack{ -3\tau -1\\+3\tau\xi-2\xi} \bigr\} &\vn & \\
  \vn & \vn & \bigl\{ \substack{6\tau +3 \\+3\tau\xi+3\xi}\bigr\} &
  \vn & \bigl\{\substack{-7\tau-5\\+5\tau\xi+4\xi} \bigr\} &
  \bigl \{ \substack{-3\tau-2\\+\xi} \bigr\} \\
  \bigl\{ \substack{-1\\-3\tau\xi-\xi} \bigr\} &\vn & \vn & \bigl\{
  \substack{-3\tau -3\\+9\tau\xi+6\xi} \bigr\} & \vn & \bigl\{
  \substack{-5\tau-4\\-2\tau\xi-\xi} \bigr\}
\end{pmatrix}.
\]
	
\bigskip

\section*{Acknowledgements}
	
It is our pleasure to thank Lorenzo Sadun for his cooperation and
valuable comments on the manuscript.  This work was supported by the
German Research Council (Deutsche Forschungsgemeinschaft, DFG) under
CRC 1283/2 (2021 - 317210226).  AM also acknowledges support from
EPSRC grant EP/Y023358/1 and thanks Bielefeld University for
hospitality during an extended research visit in Winter 2023. JM
thanks University of Birmingham for hospitality during his research
visit in July 2024. We are grateful to an anonymous referee for his
thoughtful comments which helped us to improve the presentation.

\bigskip

\end{document}